\documentclass[3p]{elsarticle}

\usepackage{lineno,hyperref}
\modulolinenumbers[5]










\usepackage{amsmath}
\usepackage{amssymb}
\usepackage{mathrsfs}
\usepackage{microtype}
\usepackage{subcaption}

\usepackage{pgfplots}
 \pgfplotsset{compat=newest}
  \usetikzlibrary{plotmarks}
  \usepackage{grffile}

\usepackage{tikz}
\usepackage{graphics,xcolor}
\definecolor{P285U}{cmyk}{0.89,0.43,0.0,0.0}
\definecolor{P285U_font}{cmyk}{0.89,0.43,0.0,0.4}
\definecolor{lgray}{cmyk}{0,0,0,0.2}
\definecolor{myblue}{cmyk}{100,75,0,0}
\definecolor{jkuBlue}{RGB}{4,110,152}
\definecolor{jkuBlue}{RGB}{0,120,170}
\definecolor{jkuCyan}{RGB}{100,180,190}
\definecolor{jkuYellow}{RGB}{230,195,35}
\definecolor{jkuGrey}{RGB}{125,130,140}
\definecolor{jkuDarkGrey}{RGB}{51,51,51}
\definecolor{jkuLightGreen}{RGB}{195,215,75}
\definecolor{jkuGreen}{RGB}{115,180,85}
\definecolor{jkuPurple}{RGB}{145,75,130}
\definecolor{jkuRed}{RGB}{205,90,80}

\usepackage{mdframed}

\newtheorem{thm}{Theorem}

\newtheorem{prop}[thm]{Proposition}

\newtheorem{exmp}{Example}
\newtheorem{rem}{Remark}
\newtheorem{proof}{Proof}

\graphicspath{{fig/}}

\bibliographystyle{elsarticle-num}

\begin{document}

\begin{frontmatter}

\title{On Structural Invariants in the Energy-Based Control of Infinite-Dimensional Port-Hamiltonian Systems with  In-Domain Actuation\tnoteref{mytitlenote}}
\tnotetext[mytitlenote]{This work has been supported by the Austrian Science Fund (FWF) under grant number P 29964-N32.}

\author[JKU]{Tobias Malzer\corref{mycorrespondingauthor}}
\ead{tobias.malzer_1@jku.at}

\author[BR]{Hubert Rams}
\ead{hubert.rams@br-automation.com}

\author[JKU]{Markus Sch\"{o}berl}
\ead{markus.schoeberl@jku.at}

\cortext[mycorrespondingauthor]{Corresponding author}

\address[JKU]{Institute of Automatic Control and Control Systems Technology, Johannes Kepler University Linz, Austria.}
\address[BR]{B\&R Industrial Automation GmbH, B\&R Stra\ss{}e 1, 5142 Eggelsberg, Austria.}

\begin{abstract}
This contribution deals with energy-based in-domain control of systems governed by partial differential equations with spatial domain up to dimension two. We exploit a port-Hamiltonian system description based on an underlying jet-bundle formalism, where we restrict ourselves to systems with $2$nd-order Hamiltonian. A certain power-conserving interconnection enables the application of a dynamic control law based on structural invariants. Furthermore, we use various examples such as beams and plates with in-domain actuation to demonstrate the capability of our approach.
\end{abstract}

\begin{keyword}
Infinite-dimensional systems \sep Partial-differential equations \sep Differential geometry \sep Port-Hamiltonian systems \sep In-domain actuation \sep Structural invariants \sep Dynamic controllers
\end{keyword}

\end{frontmatter}

\section{Introduction}

For finite dimensional systems, the port-Hamiltonian (pH) framework
has proven as an appropriate system representation, as the structure
of the ordinary differential equations (ODEs) is related to the underlying
physics. From a control-engineering point of view, in particular the
fact that so-called power ports can be introduced plays an important
role, because it allows the application of energy-based control schemes,
see e.g. \cite{Schaft2000,Ortega2001}, where the objective is to
design a desired closed-loop behaviour by means of energy shaping
and damping injection.

A famous control scheme exploiting the occurence of power ports is
the well-known energy-Casimir method, which has already been extended
to systems governed by partial differential equations (PDEs). However,
in the infinite-dimensional scenario the generation of ports strongly
depends on the underlying approach, which unfortunately is -- in
contrast to the finite-dimensional scenario -- not unique. For control-engineering
purposes, in particular the so-called Stokes-Dirac scenario as well
as an approach based on an underlying jet-bundle structure -- where
the major difference of these approaches is the choice of the variables
-- have been established. For the well-known Stokes-Dirac scenario,
relying on the use of energy variables, boundary ports solely stem
from differential operators occurring in the system description, see
\cite{Schaft2002,Gorrec2005} for instance. In contrast, regarding
the jet-bundle approach, boundary ports basically result due to derivative
variables that may occur in the Hamiltonian, but can also be generated
by differential operators, see e.g. \cite{Ennsbrunner2005,Schoeberl2014a}.
While the Stokes-Dirac scenario exhibits a close relation to functional
analytic methods that can be used to address the well-posedness as
well as stability investigations of a problem, see \cite{Jacob2012},
for the systems considered in this contribution, that allow for a
variational characterisation, the jet-bundle approach is particularly
suitable. At this point it should be mentioned that the focus of this
paper is on exploiting geometric system properties, and thus, detailed
well-posedness and stability investigations based on functional analytic
methods are not presented. Therefore, we assume well-posedness and
confine ourselves to energy considerations regarding stability investigations.

The energy-Casimir method has proven to be an effective tool, where
for boundary-control systems with $1$-dimensional spatial domain
this method enables to derive finite-dimensional controllers, see,
e.g., \cite{Macchelli2004,Macchelli2017} for the controller design
in the Stokes-Dirac scenario and, e.g., \cite{Schoeberl2013a,Rams2017a}
for the jet-bundle approach. From a mathematical point of view, this
boundary-control scheme can be interpreted as the coupling of a PDE-
with an ODE-system at the actuated boundary of the plant. In general,
this methodology also allows the use of infinite-dimensional controllers,
see \cite{TrangVu2017,Trenchant2017}, corresponding to the coupling
of PDEs with PDEs, which would imply a further rise of complexity
regarding stability investigations.

In this paper, we focus on systems with in-domain actuation, where
we restrict ourselves to lumped inputs, which motivates the interconnection
of the plant to a finite-dimensional controller, representing the
coupling of a PDE- with an ODE-system within the spatial domain. In
particular, we aim to extend the foundings of \cite{Malzer2019},
where an in-domain control strategy for pH-systems with $2$nd order
Hamiltonian and $1$-dimensional spatial domain has been developed,
to systems with $2$-dimensional spatial domain. This approach relies
on a certain interconnection of the controller and the plant via its
distributed ports that can be deduced by means of a certain power-balance
relation. However, it should be noted that for the system class under
consideration the determination of this power-balance relation is
a non-trivial task, and thus, we employ an approach based on so-called
Cartan forms proposed in \cite{Schoeberl2018}. As an example for
this system class, we will study a plate modelled according to the
Kirchhoff-Love assumptions that is actuated by two pairs of piezoelectric
macro-fibre composite (MFC) patches; and we intend to develop an appropriate
control law. Therefore, the main contributions of this paper are as
follows: i) in Section \ref{sec:Infinite-Dimensional-PH-Systems},
which deals with the description of pH-systems for the infinite-dimensional
case, amongst others, a pH-system representation for a piezo-actuated
Kirchhoff-Love plate is derived. ii) to the best of our knowledge,
for the first time an energy-based in-domain control scheme being
able to cope with pH-systems with $2$-dimensional spatial domain
is proposed, see Subsection \ref{subsec:Structural_Invariants_1}.
iii) in Subsection \ref{subsec:structural_invariatns_2} it is shown
that the proposed control scheme is able to deal with systems restricted
to certain input assignments as well.

\section{Notation and Mathematical Preliminaries}

This paper is based on differential-geometric methods, where the notation
is similar to those of \cite{Saunders1989}. To keep the formulars
short and readable, we use tensor notation and apply Einsteins convention
on sums, where we do not indicate the range of the indices when the
are clear from the context. Furthermore, we will omit pullbacks in
order to avoid exaggerated notation. The standard symbols $\mathrm{d}$,
$\wedge$ and $\rfloor$ denote the exterior derivative, the exterior
wedge product and the Hook operator, allowing the natural contraction
between tensor fields, respectively. The set of all smooth functions
on a manifold $\mathcal{M}$ is denoted by $C^{\infty}(\mathcal{M})$.

This contribution deals with systems governed by PDEs, and therefore,
to be able to distinguish between dependent and independent variables,
we introduce a so-called bundle $\pi:\mathcal{E}\rightarrow\mathcal{B}$,
with $\left(z^{i}\right)$, $i=1,\ldots,m$, denoting the independent
coordinates of the base manifold $\mathcal{B}$ and $(z^{i},x^{\alpha})$,
where $x^{\alpha}$, with $\alpha=1,\ldots,n$, are the dependent
variables, those of the total manifold $\mathcal{E}$. Next, a section
of the bundle $\pi:\mathcal{E}\rightarrow\mathcal{B}$ is given by
the map $\phi:\mathcal{B}\rightarrow\mathcal{E}$, i.e. the dependent
and independent variables are related according to $x^{\alpha}=\phi^{\alpha}(z^{i})$.
Consequently, by introducing an ordered multi index $J=j_{1}\ldots j_{m}$
with $\sum j_{i}=k=\#J$, where an index $j_{i}$ indicates that the
derivative with respect to the independent variable $z^{i}$ is carried
out $j_{i}$ times, the $k$th-order spatial derivatives of a section
$\phi$ can be given by
\[
\frac{\partial^{k}}{(\partial z^{1})^{j_{1}}\cdots(\partial z^{m})^{j_{m}}}\phi^{\alpha}=\partial_{\left[J\right]}\phi^{\alpha}=\phi_{\left[J\right]}^{\alpha}\,.
\]
Furthermore, it should be noted that $1_{i}$ corresponds to a multi
index containing only zeros except the $i$th entry that is one, and
consequently, an increase of the $i$th entry of $J$ by one is given
by $J+1_{i}$. Now, in order to introduce so-called jet variables
or derivative coordinates, we consider further important geometric
objects, namely $r$th-order jet manifolds $\mathcal{J}^{r}(\mathcal{E})$
that are equipped with the coordinates $(z^{i},x^{\alpha},x_{\left[J\right]}^{\alpha})$,
where $x^{\alpha}=x_{\left[0\ldots0\right]}^{\alpha}$ holds.

Next, we introduce the tangent bundle $\tau_{\mathcal{E}}:\mathcal{T}(\mathcal{E})\rightarrow\mathcal{E}$,
which is equipped with the coordinates $(z^{i},x^{\alpha},\dot{z}^{i},\dot{x}^{\alpha})$
and the fibre basis $\partial_{i}=\partial/\partial z^{i}$, $\partial_{\alpha}=\partial/\partial x^{\alpha}$,
and hence, a vector field $v:\mathcal{E}\rightarrow\mathcal{T}(\mathcal{E})$
is a section given in local coordinates as $v=v^{i}\partial_{i}+v^{\alpha}\partial_{\alpha}$.
Moreover, the vertical tangent bundle $\nu:\mathcal{V}(\mathcal{E})\rightarrow\mathcal{E}$
possessing the coordinates $(z^{i},x^{\alpha},\dot{x}^{\alpha})$
is of particular interest since it allows to define a vertical vector
field $v=v^{\alpha}\partial_{\alpha}$ as a section of it. Consequently,
a vertical vector field $v$ prolonged to the $r$th-order jet manifold
$\mathcal{J}^{r}(\mathcal{E})$ reads as
\begin{equation}
j^{r}\left(v\right)=v+d_{\left[J\right]}\left(v^{\alpha}\right)\partial_{\alpha}^{\left[J\right]},\;d_{\left[J\right]}=\left(d_{\left[1_{1}\right]}\right)^{j_{1}}\ldots\left(d_{\left[1_{m}\right]}\right)^{j_{m}}\label{eq:prolongation}
\end{equation}
with $1\leq\#J\leq r$, where we exploit the total derivative $d_{\left[1_{i}\right]}$
with respect to the independent variable $z^{i}$, which is given
by
\[
d_{\left[1_{i}\right]}=\partial_{i}+x_{\left[J+1_{i}\right]}^{\alpha}\partial_{\alpha}^{\left[J\right]},\quad\partial_{\alpha}^{\left[J\right]}=\frac{\partial}{\partial x_{\left[J\right]}^{\alpha}}\,.
\]

A further important geometric object is the cotangent bundle $\tau_{\mathcal{E}}^{*}:\mathcal{T}^{*}\left(\mathcal{E}\right)\rightarrow\mathcal{E}$,
which is equipped with the coordinates $\left(z^{i},x^{\alpha},\dot{z}_{i},\dot{x}_{\alpha}\right)$
and the bases $\mathrm{d}z^{i}$ and $\mathrm{d}x^{\alpha}$. Thus,
a so-called $1$-form $\omega:\mathcal{E}\rightarrow\mathcal{T}^{*}(\mathcal{E})$
is a section, which reads as $\omega=\omega_{i}\mathrm{d}z^{i}+\omega_{\alpha}\mathrm{d}x^{\alpha}$
in local coordinates. By constructing certain pullback bundles, we
are able to address special densities -- that are quantities that
can be integrated -- of the form $\mathfrak{F}=\mathcal{F}\Omega$,
with $\mathcal{F}\in C^{\infty}\left(\mathcal{J}^{r}\left(\mathcal{E}\right)\right)$
implying that the coefficients may depend on derivative coordinates
as well. The corresponding integrated quantity $\mathscr{F}=\int_{\mathcal{B}}\mathcal{F}\Omega$
is called a functional. Here, we have used a volume element $\Omega$
that is defined on the base manifold $\mathcal{B}$, and consequently,
we have $\Omega=\mathrm{d}z^{1}\wedge\ldots\wedge\mathrm{d}z^{m}$
with $\textrm{dim}\left(\mathcal{B}\right)=m$ in local coordinates.
Furthermore, a boundary-volume form is denoted by $\Omega_{i}=\partial_{i}\rfloor\Omega$.
In this contribution, it is of particular interest to determine the
change of geometric objects along vector vields $v$, and therefore,
we exploit the so-called Lie derivative, which, exemplarily, reads
as $\text{L}_{v}\left(\omega\right)$ for a differential form $\omega$.

\section{Infinite-Dimensional PH-Systems\label{sec:Infinite-Dimensional-PH-Systems}}

In this section, an approach exploiting jet-bundle structures, see
e.g. \cite{Ennsbrunner2005,Schoeberl2008a}, is used to represent
infinite-dimensional pH-systems with $2$nd-order Hamiltonian density,
i.e. $\mathcal{H}\in C^{\infty}(\mathcal{J}^{2}(\mathcal{E}))$, actuated
within the ($1$- or $2$-dimensional) spatial domain. The approach
is based on a certain power-balance relation which can be used to
introduce power ports on the domain as well as on the boundary. Furthermore,
the section is completed by examples for systems with $1$- or $2$-dimensional
spatial domain.

First, we focus on systems with $2$-dimensional spatial domain, i.e.
we study Hamiltonian systems on the bundle $\pi:\mathcal{E}\rightarrow\mathcal{B}$
with $\left(z^{1},z^{2},x^{\alpha}\right)$ denoting the coordinates
of $\mathcal{E}$. The $2$nd-order Hamiltonian density is given by
$\mathfrak{H}=\mathcal{H}\Omega$ with $\mathcal{H}\in C^{\infty}(\mathcal{J}^{2}(\mathcal{E}))$,
where a volume element takes the local form $\Omega=\mathrm{d}z^{1}\wedge\mathrm{d}z^{2}$.
Now, we focus our interest on an evolutionary vector field $v=v^{\alpha}\partial_{\alpha}$,
corresponding to the set of PDEs
\begin{equation}
\dot{x}^{\alpha}=v^{\alpha}\,,\quad\text{with}\quad v^{\alpha}\in C^{\infty}(\mathcal{J}^{4}(\mathcal{E}))\,,\label{eq:evolution_equation}
\end{equation}
together with appropriate boundary conditions, where the time $t$
plays the role of the evolution parameter of the solution (well-posedness
provided). Next, the evolution of the Hamiltonian functional $\mathscr{H}=\int_{\mathcal{B}}\mathcal{H}\Omega$
along solutions of (\ref{eq:evolution_equation}) according to
\begin{equation}
\dot{\mathscr{H}}=\int_{\mathcal{B}}\text{L}_{j^{2}\left(v\right)}\left(\mathcal{H}\Omega\right)\label{eq:H_p_prolong_v}
\end{equation}
is of particular interest. Basically, by considering (\ref{eq:prolongation})
with $r=2$, the formal change $\dot{\mathscr{H}}$ can be deduced
by means of integration by parts. However, for the system class under
consideration, i.e. $2$nd-order Hamiltonian density and $2$-dimensional
spatial domain, the determination of the formal change is not straightforward
due to the ambiguity of the integration by parts which may yield wrong
boundary terms. To cope with that inconveniences, in \cite{Schoeberl2018}
an approach based on certain Cartan forms is proposed, where coordinates
adapted to the boundary as well as a boundary-volume form $\bar{\Omega}_{2}$
adapted to the boundary are used. Hence, based on \cite[Eqs. (13) and (14) ]{Schoeberl2018}
it is possible to derive the boundary operators\begin{subequations}\label{eq:boundary_operators}
\begin{align}
\delta^{\partial,1}\mathfrak{H} & =(\partial_{\alpha}^{\left[01\right]}\mathcal{H}-d_{\left[10\right]}(\partial_{\alpha}^{\left[11\right]}\mathcal{H})-d_{\left[01\right]}(\partial_{\alpha}^{\left[02\right]}\mathcal{H}))\mathrm{d}x^{\alpha}\wedge\bar{\Omega}_{2}\,,\label{eq:boundary_operator_1}\\
\delta^{\partial,2}\mathfrak{H} & =\partial_{\alpha}^{\left[02\right]}\mathcal{H}\mathrm{d}x_{\left[01\right]}^{\alpha}\wedge\bar{\Omega}_{2}\,,\label{eq:boundary_operator_2}
\end{align}
\end{subequations}whereas the variational derivative is given by
\begin{equation}
\delta\mathfrak{H}=\delta_{\alpha}\mathcal{H}\mathrm{d}x^{\alpha}\wedge\Omega\label{eq:domain_operator}
\end{equation}
with
\[
\delta_{\alpha}=\partial_{\alpha}-d_{\left[10\right]}\partial_{\alpha}^{\left[10\right]}-d_{[01]}\partial_{\alpha}^{\left[01\right]}+d_{[20]}\partial_{\alpha}^{\left[20\right]}+d_{[11]}\partial_{\alpha}^{\left[11\right]}+d_{[02]}\partial_{\alpha}^{\left[02\right]}\,.
\]
Furthermore, this approach allows to introduce a so-called decomposition
theorem given in \cite[Theorem 3.2]{Rams2018}, which plays a crucial
role not only for the determination of the formal change of the Hamiltonian,
but also for the derivation of structural invariants in Section \ref{sec:In-Domain-Control}.

\begin{thm}\cite[Theorem 3.2]{Rams2018}\label{thm:decomposition}
Let $\mathcal{H}\in C^{\infty}(\mathcal{J}^{2}(\mathcal{E}))$ be
a 2nd-order density and $v$ an evolutionary vector field. Then, the
integral $\int_{\mathcal{B}}\mathrm{L}_{j^{2}(v)}(\mathcal{H}\Omega)$
can be decomposed into
\[
\dot{\mathscr{H}}=\int_{\mathcal{B}}v\rfloor\delta\mathfrak{H}+\int_{\partial\mathcal{B}}v\rfloor\delta^{\partial,1}\mathfrak{H}+\int_{\partial\mathcal{B}}j^{1}\left(v\right)\rfloor\delta^{\partial,2}\mathfrak{H}
\]
with the domain operator (\ref{eq:domain_operator}), as well as both
the boundary operators according to (\ref{eq:boundary_operator_1})
and (\ref{eq:boundary_operator_2}).\end{thm}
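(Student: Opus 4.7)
Since $v = v^{\alpha}\partial_{\alpha}$ is vertical and $\Omega$ is purely horizontal, the Lie derivative collapses to $\mathrm{L}_{j^{2}(v)}(\mathcal{H}\Omega) = j^{2}(v)(\mathcal{H})\,\Omega$. Substituting the prolongation formula (\ref{eq:prolongation}) with $r=2$ expands this integrand into six summands of the form $d_{[J]}(v^{\alpha})\,\partial_{\alpha}^{[J]}\mathcal{H}$ with multi-indices $0\le \#J \le 2$. The overall strategy is to rewrite the sum, via iterated integration by parts, as $v^{\alpha}\,\delta_{\alpha}\mathcal{H}\,\Omega$ plus a total divergence whose Stokes transform reproduces exactly the two prescribed boundary integrals.

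The zeroth- and first-order contributions are routine: a single integration by parts against $d_{[10]}$ or $d_{[01]}$ generates the expected summands $-d_{[1_{i}]}(\partial_{\alpha}^{[1_{i}]}\mathcal{H})$ of $\delta_{\alpha}\mathcal{H}$ together with the unambiguous divergences $d_{[1_{i}]}(v^{\alpha}\partial_{\alpha}^{[1_{i}]}\mathcal{H})$. The pure second-order terms $d_{[20]}(v^{\alpha})\partial_{\alpha}^{[20]}\mathcal{H}$ and $d_{[02]}(v^{\alpha})\partial_{\alpha}^{[02]}\mathcal{H}$ require two successive integrations by parts along the same coordinate direction; these also contribute unambiguously, producing both the bulk pieces $v^{\alpha}d_{[2\cdot 1_{i}]}(\partial_{\alpha}^{[2\cdot 1_{i}]}\mathcal{H})$ of $\delta_{\alpha}\mathcal{H}$ and boundary remainders containing $v^{\alpha}$ and its first derivative in the same direction.

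The crux is the mixed second-order summand $d_{[11]}(v^{\alpha})\partial_{\alpha}^{[11]}\mathcal{H}$. Integration by parts in the order $d_{[10]}\circ d_{[01]}$ versus $d_{[01]}\circ d_{[10]}$ yields the same bulk contribution $v^{\alpha}d_{[11]}(\partial_{\alpha}^{[11]}\mathcal{H})$ but genuinely different boundary remainders, only one specific combination of which is geometrically intrinsic. This is precisely the obstruction flagged after (\ref{eq:H_p_prolong_v}), and it is the real difficulty of the statement. I would resolve it by invoking the Cartan-form construction of \cite{Schoeberl2018}: work in coordinates adapted to $\partial\mathcal{B}$, use the induced boundary volume $\bar{\Omega}_{2} = \partial_{2}\rfloor\Omega$, and organise the total divergence so that tangential total derivatives along $\partial\mathcal{B}$ can themselves be integrated by parts along the one-dimensional boundary, redistributing them canonically into the surviving coefficients.

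After this regrouping, the boundary contributions pairing with $v^{\alpha}$ combine into the coefficient $\partial_{\alpha}^{[01]}\mathcal{H} - d_{[10]}(\partial_{\alpha}^{[11]}\mathcal{H}) - d_{[01]}(\partial_{\alpha}^{[02]}\mathcal{H})$, reproducing $v\rfloor\delta^{\partial,1}\mathfrak{H}$, while those pairing with the transverse first-jet component $x^{\alpha}_{[01]}$ retain the coefficient $\partial_{\alpha}^{[02]}\mathcal{H}$, i.e.\ $j^{1}(v)\rfloor\delta^{\partial,2}\mathfrak{H}$. Applying Stokes' theorem and combining with the reconstructed bulk integrand $v^{\alpha}\delta_{\alpha}\mathcal{H}\,\Omega = v\rfloor\delta\mathfrak{H}$ then yields the claimed decomposition. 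The main obstacle is thus not the bulk arithmetic, which follows mechanically from (\ref{eq:prolongation}), but the coordinate-invariant identification of the boundary data from the mixed term; this step I would import directly from \cite{Schoeberl2018} rather than attempt via an ad-hoc integration-by-parts calculation.
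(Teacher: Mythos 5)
The paper does not actually prove Theorem~\ref{thm:decomposition}: it is imported verbatim from \cite[Theorem 3.2]{Rams2018}, with the underlying Cartan-form machinery attributed to \cite{Schoeberl2018}, so there is no in-paper argument to compare against line by line. Your plan is consistent with the route those references take: the reduction $\mathrm{L}_{j^{2}(v)}(\mathcal{H}\Omega)=j^{2}(v)(\mathcal{H})\,\Omega$ for a vertical $v$ is correct, the expansion into the six summands $d_{[J]}(v^{\alpha})\partial_{\alpha}^{[J]}\mathcal{H}$ is right, and you correctly locate the entire difficulty in the mixed term $d_{[11]}(v^{\alpha})\partial_{\alpha}^{[11]}\mathcal{H}$, whose two integration-by-parts orderings agree in the bulk (both yield $v^{\alpha}d_{[11]}(\partial_{\alpha}^{[11]}\mathcal{H})$, matching the $+d_{[11]}\partial_{\alpha}^{[11]}$ term of $\delta_{\alpha}$) but distribute the boundary remainder differently. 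Your identification of the resulting boundary coefficients with (\ref{eq:boundary_operator_1}) and (\ref{eq:boundary_operator_2}) is also the correct target.

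That said, what you have written is a proof plan rather than a proof: the one step that makes the theorem nontrivial --- the intrinsic, ordering-independent selection of the boundary data coming from $d_{[11]}$ via the Cartan form and adapted coordinates --- is exactly the step you declare you would import from \cite{Schoeberl2018} instead of carrying out. Since the paper does the same, this is not a defect relative to the paper, but be aware that nothing in your text would let a reader verify that the particular combination $\partial_{\alpha}^{[01]}\mathcal{H}-d_{[10]}(\partial_{\alpha}^{[11]}\mathcal{H})-d_{[01]}(\partial_{\alpha}^{[02]}\mathcal{H})$ is the canonical one. A second point to flag: your disposal of the tangential total derivatives along $\partial\mathcal{B}$ by ``integrating by parts along the one-dimensional boundary'' silently assumes either a closed boundary or a smooth one; for the rectangular domains used in Ex.~\ref{ex:Kirchhoff-Love_plate_piezo-actuated} the boundary has corners, and those tangential divergences produce corner contributions that the adapted-coordinate/Cartan-form construction must account for. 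Neither issue invalidates the plan, but both would have to be addressed before it becomes a proof.
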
 Next, we give a pH-system
representation making heavy use of a certain power-balance relation
that can be introduced based on Theorem \ref{thm:decomposition}.

The objective of the pH-system representation is to exploit the structure
of the governing evolution equations (\ref{eq:evolution_equation}),
which are therefore rewritten in the form\begin{subequations}\label{eq:pH_system_general}
\begin{align}
\dot{x} & =\left(\mathcal{J}-\mathcal{R}\right)\left(\delta\mathfrak{H}\right)+u\rfloor\mathcal{G}\,,\label{eq:pH_sys_dyn}\\
y & =\mathcal{G}^{*}\rfloor\delta\mathfrak{H}\,,\label{eq:pH_sys_output}
\end{align}
\end{subequations}where $\mathfrak{H}$ denotes a 2nd-order Hamiltonian.
Here, the (skew-symmetric) interconnection map $\mathcal{J}:\mathcal{T}^{*}\left(\mathcal{E}\right)\wedge\mathcal{T}^{*}\left(\mathcal{B}\right)\rightarrow\mathcal{V}\left(\mathcal{E}\right)$,
where $\mathcal{J}^{\alpha\beta}=-\mathcal{J}^{\alpha\beta}\in C^{\infty}(\mathcal{J}^{4}(\mathcal{E}))$
is met for the coefficients, describes the internal power flow of
the system, whereas the (symmetric and positive semi-definite) map
$\mathcal{R}:\mathcal{T}^{*}\left(\mathcal{E}\right)\wedge\mathcal{T}^{*}\left(\mathcal{B}\right)\rightarrow\mathcal{V}\left(\mathcal{E}\right)$,
satisfying $\mathcal{R}^{\alpha\beta}=\mathcal{R}^{\beta\alpha}\in C^{\infty}\left(\mathcal{J}^{4}\left(\mathcal{E}\right)\right)$
and $\left[\mathcal{R}^{\alpha\beta}\right]\geq0$ for the coefficient
matrix, is related to the dissipation effects. Furthermore, the input
map $\mathcal{G}:\mathcal{U}\rightarrow\mathcal{V}\left(\mathcal{E}\right)$
allows to include external inputs that may be distributed over (a
part of) the spatial domain -- i.e. both, the input coordinates $u^{\xi}\in\mathcal{U}$
as well as the coefficients $\mathcal{G}_{\xi}^{\alpha}$, may depend
(amongst others) on the spatial variables $z^{i}$ -- and is of great
relevance as we intend to develop in-domain control strategies in
this paper. Consequently, due to the distributed components $\mathcal{G}_{\xi}^{\alpha}$
of the adjoint output map $\mathcal{G}^{*}:\mathcal{T}^{*}\left(\mathcal{E}\right)\wedge\mathcal{T}^{*}\left(\mathcal{B}\right)\rightarrow\mathcal{Y}$,
(\ref{eq:pH_sys_output}) can be interpreted as distributed output
densities. Moreover, the fact that the input bundle $\rho:\mathcal{U}\rightarrow\mathcal{J}^{4}\left(\mathcal{E}\right)$
is dual to the output bundle $\varrho:\mathcal{Y}\rightarrow\mathcal{J}^{4}\left(\mathcal{E}\right)$,
see \cite[Section 4]{Ennsbrunner2005} or \cite[Section 3]{Schoeberl2008a},
yields the important relation
\begin{equation}
\left(u\rfloor\mathcal{G}\right)\rfloor\delta\mathfrak{H}=u\rfloor\left(\mathcal{G}^{*}\rfloor\delta\mathfrak{H}\right)=u\rfloor y\,,\label{eq:duality}
\end{equation}
which will play an important role for evaluating $\dot{\mathscr{H}}$.
Furthermore, in local coordinates (\ref{eq:pH_system_general}) reads
as\begin{subequations}\label{eq:pH_system_local_rep}
\begin{align}
\dot{x}^{\alpha} & =\left(\mathcal{J}^{\alpha\beta}-\mathcal{R}^{\alpha\beta}\right)\delta_{\beta}\mathcal{H}+\mathcal{G}_{\xi}^{\alpha}u^{\xi}\,,\\
y_{\xi} & =\mathcal{G}_{\xi}^{\alpha}\delta_{\alpha}\mathcal{H}\,,\label{eq:output_densities_plant}
\end{align}
\end{subequations}with $\alpha,\beta=1,\ldots,n$ and $\xi=1,\ldots,l$.

Next, it is of particular interest to reinterpret the formal change
of the Hamiltonian by keeping the pH-system representation (\ref{eq:pH_system_general})
in mind. By applying the decomposition Theorem \ref{thm:decomposition},
where we substitute $v=\dot{x}$ with (\ref{eq:pH_sys_dyn}) and use
the relation (\ref{eq:duality}), we conclude that
\[
\dot{\mathscr{H}}=-\int_{\mathcal{B}}\mathcal{R}\left(\delta\mathfrak{H}\right)\rfloor\delta\mathfrak{H}+\int_{\mathcal{\mathcal{B}}}u\rfloor y+\int_{\partial\mathcal{B}}\dot{x}\rfloor\delta^{\partial,1}\mathfrak{H}+\int_{\partial\mathcal{B}}\dot{x}_{\left[01\right]}\rfloor\delta^{\partial,2}\mathfrak{H}
\]
is divided into 4 parts. The energy of the system that is dissipated
-- e.g. due to damping -- is described by the expression $-\int_{\mathcal{B}}\mathcal{R}\left(\delta\mathfrak{H}\right)\rfloor\delta\mathfrak{H}$,
whereas the remaining terms denote collocation on the domain as well
as on the boundary. In particular the expression $\int_{\mathcal{\mathcal{B}}}u\rfloor y$,
which follows from the in- and outputs distributed over the spatial
domain, is of significant importance in this contribution. Moreover,
it should be noted that the 2 different boundary-port categories $\int_{\partial\mathcal{B}}\dot{x}\rfloor\delta^{\partial,1}\mathfrak{H}$
and $\int_{\partial\mathcal{B}}\dot{x}_{\left[01\right]}\rfloor\delta^{\partial,2}\mathfrak{H}$
are a consequence of the $2$nd-order Hamiltonian density. For the
sake of completeness, by using the local representation (\ref{eq:pH_system_local_rep}),
we can state the formal change in local coordinates according to
\[
\dot{\mathscr{H}}=-\int_{\mathcal{B}}\delta_{\alpha}\left(\mathcal{H}\right)\mathcal{R}^{\alpha\beta}\delta_{\beta}\left(\mathcal{H}\right)\Omega+\int_{\mathcal{B}}u^{\xi}y_{\xi}\Omega+\int_{\partial\mathcal{B}}\dot{x}^{\alpha}\delta_{\alpha}^{\partial,1}\mathcal{H}\bar{\Omega}_{2}+\int_{\partial\mathcal{B}}\dot{x}_{\left[01\right]}^{\alpha}\delta_{\alpha}^{\partial,2}\mathcal{H}\bar{\Omega}_{2}
\]
with the variational derivative (\ref{eq:domain_operator}) and the
boundary operators (\ref{eq:boundary_operator_1}) and (\ref{eq:boundary_operator_2}).
At this point, it should be stressed that we confine ourselves to
systems with in-domain actuation solely, implying that no power can
be extracted from or delivered to the system via the boundary, and
therefore, in this scenario the boundary ports $\int_{\partial\mathcal{B}}\dot{x}^{\alpha}\delta_{\alpha}^{\partial,1}\mathcal{H}\bar{\Omega}_{2}$
and $\int_{\partial\mathcal{B}}\dot{x}_{\left[01\right]}^{\alpha}\delta_{\alpha}^{\partial,2}\mathcal{H}\bar{\Omega}_{2}$
vanish identically.\begin{rem}\label{Rem:Boundary_operators}Although
boundary ports only play an tangential role in this contribution,
worth mentioning is the fact that the boundary terms can easily be
deduced by means of (\ref{eq:boundary_operator_1}) and (\ref{eq:boundary_operator_2}).
Furthermore, the boundary operators $\delta_{\alpha}^{\partial,1}$
and $\delta_{\alpha}^{\partial,2}$ are of major importance for the
determination of certain Casimir conditions for the controller design
treated in the next section.\end{rem}

Having discussed the framework for pH-systems with $2$nd-order Hamiltonian
and $2$-dimensional spatial domain, as an example a plate that is
modelled according to the Kirchhoff-Love theory and actuated by 2
pairs of piezoelectric MFC patches shall be studied, see Fig. \ref{fig:Schematic_Kirchhoff_Love_plate}.
\begin{figure}
\centering
\def \svgwidth{0.5\textwidth}
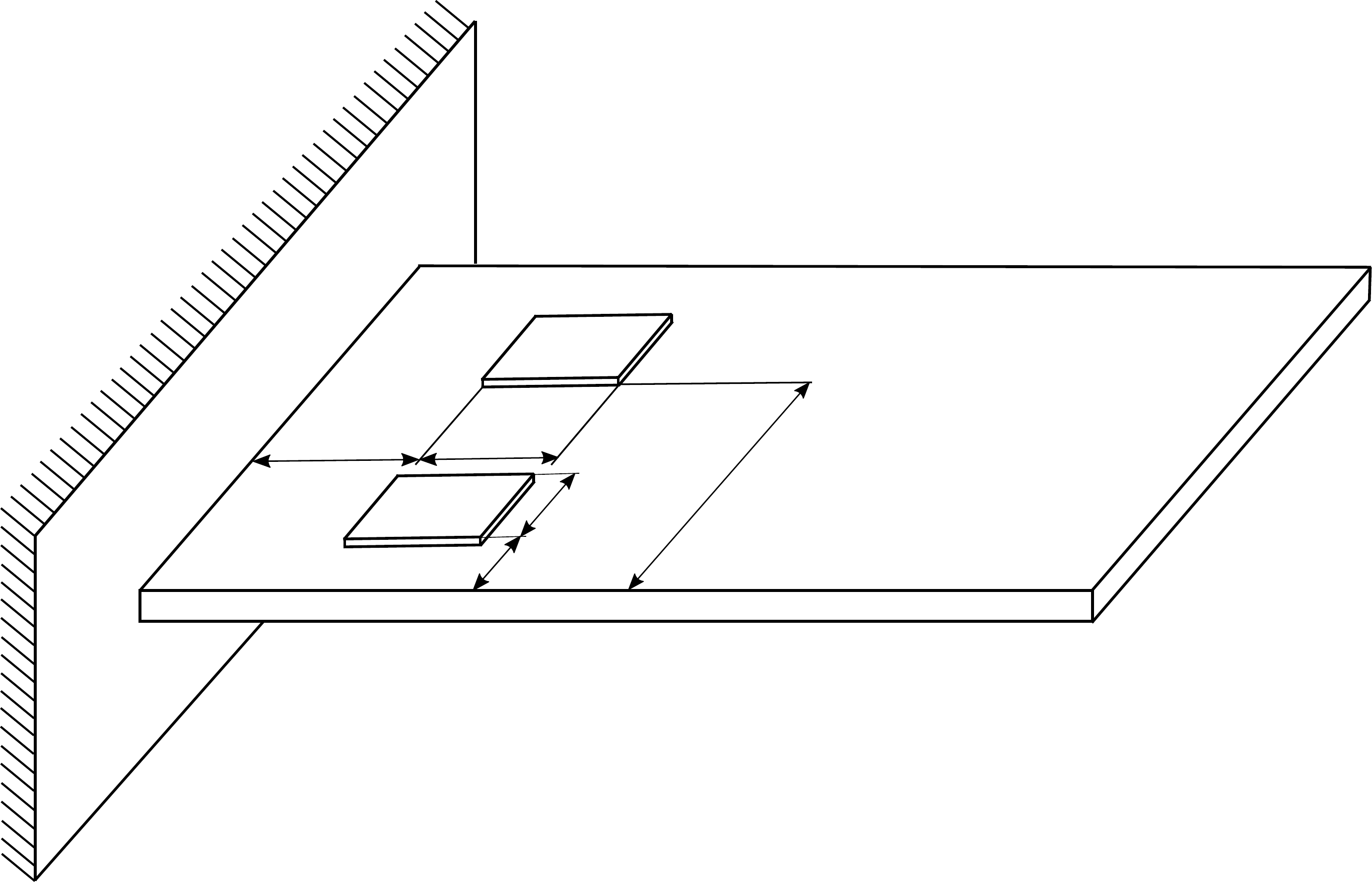\caption{\label{fig:Schematic_Kirchhoff_Love_plate}Schematic representation
of the piezo-actuated Kirchhoff-Love plate.}

\end{figure}
Hence, our intention is to find a pH-system representation for the
governing equation of motion -- that can be derived by using the
calculus of variation, see \cite{Meirovitch1997} for instance --
being useful regarding the energy-based controller design proposed
in Section \ref{sec:In-Domain-Control}.

\begin{exmp}[Piezo-actuated Kirchhoff-Love plate]\label{ex:Kirchhoff-Love_plate_piezo-actuated}Let
$\mathcal{B}=\{(z^{1},z^{2})|0\leq z^{1}\leq L_{1}\land0\leq z^{2}\leq L_{2}\}$
be the spatial domain of a rectangular plate modelled according to
the Kirchhoff-Love hypothesis, where the plate is clamped at the edge
$\partial\mathcal{B}_{1}=\{(z^{1},z^{2})|z^{1}=0\land0\leq z^{2}\leq L_{2}\}$,
while the remaining edges $\partial\mathcal{B}_{2}=\{(z^{1},z^{2})|0\leq z^{1}\leq L_{1}\land z^{2}=0\}$,
$\partial\mathcal{B}_{3}=\{(z^{1},z^{2})|z^{1}=L_{1}\land0\leq z^{2}\leq L_{2}\}$
and $\partial\mathcal{B}_{4}=\{(z^{1},z^{2})|0\leq z^{1}\leq L_{1}\land z^{2}=L_{2}\}$
are free. Moreover, the considered plate is actuated by 2 pairs of
piezoelectric patches, where each pair consists of 2 single patches
that are placed symmetrically on the upper and lower side of the plate.
Thus, the equation of motion for the system under consideration is
given by
\begin{multline}
\mu\left(z^{1},z^{2}\right)\ddot{w}=-d_{\left[20\right]}\left(\Xi\left(z^{1},z^{2}\right)\left(w_{\left[20\right]}+\nu w_{\left[02\right]}\right)\right)-d_{\left[02\right]}\left(\Xi\left(z^{1},z^{2}\right)\left(\nu w_{\left[20\right]}+w_{\left[02\right]}\right)\right)\\
-2d_{\left[11\right]}\left(\Xi\left(z^{1},z^{2}\right)\left(1-\nu\right)w_{\left[11\right]}\right)-\Lambda_{k}u_{in}^{k}\,,\label{eq:EOM_piezo_actuated_plate}
\end{multline}
with Poisson's ratio $\nu$ and $w_{\left[20\right]}=\frac{\partial^{2}w}{\partial(z^{1})^{2}}$
denoting the $2$nd-order derivative of the transversal plate deflection
$w$ with respect to $z^{1}$ for instance. Furthermore, the voltages
$u_{in}^{k}$ with $k=1,2$, which are applied to the MFC patches,
shall serve as manipulated variables, where $\Lambda_{k}=\Psi_{p}(a^{1}d_{\left[20\right]}\Gamma_{k}(z^{1},z^{2})+a^{2}d_{\left[02\right]}\Gamma_{k}(z^{1},z^{2}))$,
with $a^{1}$, $a^{2}$, $\Psi_{p}$ comprising several piezo-parameters,
states the spatial distribution of the inputs; see \cite{Meurer2012}
for a similar model where also dissipation effects are considered.
It is worth stressing that in (\ref{eq:EOM_piezo_actuated_plate})
the mass density $\mu(z^{1},z^{2})=\rho_{c}h_{c}+2\rho_{p}h_{p}(\Gamma_{1}(z^{1},z^{2})+\Gamma_{2}(z^{1},z^{2}))$,
with $\rho_{c}h_{c}$ and $\rho_{p}h_{p}$ denoting the mass densities
of the carrier layer and of the MFC paches, as well as the flexural
rigidity $\Xi(z^{1},z^{2})=E_{c}I_{c}+2\Xi_{p}(\Gamma_{1}(z^{1},z^{2})+\Gamma_{2}(z^{1},z^{2}))$,
with the flexural rigidity of the carrier layer $E_{c}I_{c}$ and
those of the MFC patches $\Xi_{p}$, are spatially dependent due to
the incorporation of the piezoelectric patches, which can be included
by the spatial characteristic functions
\begin{equation}
\Gamma_{k}\left(z^{1},z^{2}\right)=\left(h\left(z^{1}-z_{p}^{1}\right)-h\left(z^{1}-z_{p}^{1}-L_{p}^{1}\right)\right)\times\left(h\left(z^{2}-z_{p,k}^{2}\right)-h\left(z^{2}-z_{p,k}^{2}-L_{p}^{2}\right)\right)\,,\label{eq:charc_func_heavi_2D}
\end{equation}
with $k=1,2$, the heaviside function $h\left(\cdot\right)$ and the
geometric dimensions that are depicted in Fig. \ref{fig:Schematic_Kirchhoff_Love_plate}.
At this point it should be mentioned that in (\ref{eq:EOM_piezo_actuated_plate})
spatial derivatives of the characteristic functions $\Gamma_{k}(z^{1},z^{2})$
arise. Consequently, the use of the characteristic function (\ref{eq:charc_func_heavi_2D})
would require a weak formulation of the equation of motion. To be
able to exploit the strong formulation nonetheless, we approximate
the characteristic function (\ref{eq:charc_func_heavi_2D}) by the
spatially differentiable function
\begin{multline*}
\Gamma_{k}\left(z^{1},z^{2}\right)=\left(\frac{1}{2}\tanh\left(\sigma\left(z^{1}-z_{p}^{1}\right)\right)-\frac{1}{2}\tanh\left(\sigma\left(z^{1}-z_{p}^{1}-L_{p}^{1}\right)\right)\right)\times\\
\left(\frac{1}{2}\tanh\left(\sigma\left(z^{2}-z_{p,k}^{2}\right)\right)-\frac{1}{2}\tanh\left(\sigma\left(z^{2}-z_{p,k}^{2}-L_{p}^{2}\right)\right)\right)\,,
\end{multline*}
where $\sigma\in\mathbb{R}_{+}$ denotes a scaling factor. By virtue
of the plate configuration, see Fig. \ref{fig:Schematic_Kirchhoff_Love_plate},
regarding the boundary conditions we have\begin{subequations}\label{eq:BC_plate}
\begin{equation}
\left.\begin{array}{lcc}
w & = & 0\\
w_{\left[10\right]} & = & 0
\end{array}\right\} \quad\text{for}\quad\partial\mathcal{B}_{1}\,,
\end{equation}
whereas the shear force and the bending moment vanish along free edges,
i.e.
\begin{align}
\left.\begin{array}{ccc}
Q_{2} & = & 0\\
M_{2} & = & 0
\end{array}\right\} \quad\text{for}\quad & \partial\mathcal{B}_{3}\,,\\
\left.\begin{array}{ccc}
Q_{1} & = & 0\\
M_{1} & = & 0
\end{array}\right\} \quad\text{for}\quad & \partial\mathcal{B}_{2},\partial\mathcal{B}_{4}\,,
\end{align}
\end{subequations}with $Q_{1}=w_{\left[03\right]}+\left(2-\nu\right)w_{\left[21\right]}$,
$Q_{2}=-w_{\left[30\right]}-\left(2-\nu\right)w_{\left[12\right]}$,
$M_{1}=-w_{\left[02\right]}-\nu w_{\left[20\right]}$ and $M_{2}=w_{\left[20\right]}+\nu w_{\left[02\right]}$
denoting the shear force and the bending moment on the particular
edge.

Now, we focus our interest in finding a proper pH-system representation
for the piezo-actuated Kirchhoff-Love plate. To this end, we introduce
the momentum $p=\mu\left(z^{1},z^{2}\right)\dot{w}$, and consequently,
the total kinetic energy density of the underlying system reads as
\[
\mathcal{T}=\frac{1}{2\mu\left(z^{1},z^{2}\right)}p^{2}\,.
\]
If the plate is modelled based on the Kirchhoff-Love hypothesis --
i.e straight lines perpendicular to the midplane are supposed to remain
straight and perpendicular to the midplane during motion, and, the
transverse normal stress can be neglected as it is sufficient small
compared to the other normal stresses --, and it is assumed that
the piezoelectric material can be described by linear constitutive
relations, see \cite[Section 3.3.1]{Schroeck2011a}, the total potential
energy density follows to
\begin{multline*}
\mathcal{V}=\frac{1}{2}\Xi\left(z^{1},z^{2}\right)\left(\left(w_{\left[20\right]}\right)^{2}+\left(w_{\left[02\right]}\right)^{2}\right)+\frac{1}{2}\Xi\left(z^{1},z^{2}\right)\left(2\nu w_{\left[20\right]}w_{\left[02\right]}+2\left(1-\nu\right)\left(w_{\left[11\right]}\right)^{2}\right)\\
+\Psi_{p}\left(a^{1}w_{\left[20\right]}+a^{2}w_{\left[02\right]}\right)\Gamma_{k}\left(z^{1},z^{2}\right)u_{in}^{k}\,,
\end{multline*}
where the constants $\Xi_{p}$ and $\Psi_{p}$ comprise material parameters
of the MFC patches and $a^{1}$, $a^{2}$ stem from the linear constitutive
relations. Basically, with regard to boundary-control systems, the
total-energy density $\mathcal{T+\mathcal{V}}$ is used to obtain
a proper pH-system representation. However, due to the fact that we
focus on systems with in-domain actuation, we have included an input
part in (\ref{eq:pH_system_local_rep}). Therefore, the aim is to
find a Hamiltonian density such that an evaluation of $\delta_{w}\mathcal{H}$
yields the right-hand side of (\ref{eq:EOM_piezo_actuated_plate}),
but without the input part comprising $u_{in}^{k}$. To incorporate
the input part in the pH-system representation, we set the input-map
components to $g_{21}(z^{1},z^{2})=-\Lambda_{1}(z^{1},z^{2})$ and
$g_{22}(z^{1},z^{2})=-\Lambda_{2}(z^{1},z^{2})$ describing the spatial
distribution of the inputs $u_{in}^{k}$. Consequently, if we choose
the Hamiltonian density according to
\[
\mathcal{H}=\frac{1}{2\mu(z^{1},z^{2})}p^{2}+\frac{1}{2}\Xi(z^{1},z^{2})((w_{\left[20\right]})^{2}+(w_{\left[02\right]})^{2})+\frac{1}{2}\Xi(z^{1},z^{2})(2\nu w_{\left[20\right]}w_{\left[02\right]}+2(1-\nu)(w_{\left[11\right]})^{2}),
\]
a suitable pH-system description for the piezo-actuated Kirchhoff-Love
plate reads as\begin{subequations}\label{eq:pH_representation_piezoactuated_plate}
\begin{align}
\left[\begin{array}{c}
\dot{w}\\
\dot{p}
\end{array}\right]\! & =\!\left[\begin{array}{cc}
0 & 1\\
-1 & 0
\end{array}\right]\!\left[\begin{array}{c}
\delta_{w}\mathcal{H}\\
\delta_{p}\mathcal{H}
\end{array}\right]\!+\!\left[\begin{array}{cc}
0 & 0\\
g_{21} & g_{22}
\end{array}\right]\!\left[\begin{array}{c}
u_{in}^{1}\\
u_{in}^{2}
\end{array}\right],\\
\left[\begin{array}{c}
y_{1}\\
y_{2}
\end{array}\right] & \!=\!\left[\begin{array}{cc}
0 & g_{21}\\
0 & g_{22}
\end{array}\right]\!\left[\begin{array}{c}
\delta_{w}\mathcal{H}\\
\delta_{p}\mathcal{H}
\end{array}\right]\!=\!\left[\begin{array}{c}
g_{21}\dot{w}\\
g_{22}\dot{w}
\end{array}\right]\,,\label{eq:plate_output_densities}
\end{align}
\end{subequations}together with the boundary conditions (\ref{eq:BC_plate}),
where it should be stressed that $Q_{1}$, $Q_{2}$, $M_{1}$ and
$M_{2}$ can be deduced by evaluating (\ref{eq:boundary_operator_1})
and (\ref{eq:boundary_operator_2}), c.f. Rem. \ref{Rem:Boundary_operators}.
Next, to be able to introduce the power-ports for the system under
consideration, we determine the formal change of the Hamiltonian functional
$\mathscr{H}=\int_{\mathcal{B}}\mathcal{H}\Omega$, where the decomposition
Theorem \ref{thm:decomposition} shall be used. As we do not have
any boundary actuation, see the boundary conditions (\ref{eq:BC_plate}),
it becomes obvious that the boundary ports vanish. Furthermore, dissipation
effects are neglected in this example at all, and therefore, the formal
change reduces to
\begin{equation}
\dot{\mathscr{H}}=\int_{\mathcal{B}}\left(g_{21}(z^{1},z^{2})\dot{w}u_{in}^{1}+g_{22}(z^{1},z^{2})\dot{w}u_{in}^{2}\right)\Omega.\label{eq:h_p_plate}
\end{equation}
At this point it should be mentioned that a careful investigation
of the actuator parameters hidden in $\Psi_{p}$ shows that the unit
of the distributed output densities (\ref{eq:plate_output_densities})
is $\frac{\mathrm{A}}{\mathrm{m}^{2}}$. Hence, one can conclude that
the formal change (\ref{eq:h_p_plate}) corresponds to an electrical
power-balance relation.\end{exmp}By means of Ex. \ref{ex:Kirchhoff-Love_plate_piezo-actuated},
we have demonstrated that due to the incorporation of in-domain actuators,
which basically always exhibit a spatial distribution, power ports
that are distributed over (a part of) the spatial domain can arise.
In Section \ref{sec:In-Domain-Control}, these distributed power ports
shall be used for the controller design. However, from a control-engineering
point of view, it can also be of particular interest to investigate
distributed-parameter systems with actuators that can be modelled
-- at least approximately -- with an infinitesimal distribution,
where for the sake of simplicity we focus on distributed-parameter
systems with 1-dimensional spatial domain, i.e. we equip $\mathcal{B}$
with the independent coordinate $z^{1}$ solely. Consequently, a volume
form on $\mathcal{B}$ reads as $\Omega=\mathrm{d}z^{1}$ implying
that the corresponding boundary-volume form follows to $\Omega_{1}=\partial_{1}\rfloor\mathrm{d}z^{1}$.
For pH-systems with 1-dimensional spatial domain and 2nd-order Hamiltonian
density, the variational derivative in local coordinates is given
by
\[
\delta_{\alpha}=\partial_{\alpha}-d_{\left[1\right]}\partial_{\alpha}^{\left[1\right]}+d_{\left[2\right]}\partial_{\alpha}^{\left[2\right]}\,,
\]
whereas the boundary operators locally read as
\begin{equation}
\delta_{\alpha}^{\partial,1}\mathcal{H}=\partial_{\alpha}^{\left[1\right]}\mathcal{H}-d_{\left[1\right]}\left(\partial_{\alpha}^{\left[2\right]}\mathcal{H}\right),\quad\delta_{\alpha}^{\partial,2}\mathcal{H}=\partial_{\alpha}^{\left[2\right]}\mathcal{H}\,.\label{eq:boundary_operators_1D}
\end{equation}
In light of the aforementioned aspect, we introduce a specific form
of pH-systems according to\begin{subequations}\label{eq:Dirac_Klasse}
\begin{align}
\dot{x}^{\alpha} & =\left(\mathcal{J}^{\alpha\beta}-\mathcal{R}^{\alpha\beta}\right)\delta_{\beta}\mathcal{H}+\mathcal{G_{\xi}^{\alpha}}u^{\xi}\\
y_{\xi} & =\mathcal{G}_{\xi}^{\alpha}\delta_{\alpha}\mathcal{H}
\end{align}
with
\begin{equation}
\begin{array}{lclcccl}
\mathcal{G}_{\xi}^{\gamma} & = & 0 & \text{for} & \gamma & = & 1,\ldots,n_{1}\\
\mathcal{G}_{\xi}^{\rho} & = & \delta\left(z^{1}-A_{\xi}\right) & \textrm{for} & \rho & = & n_{1}+1,\ldots,n
\end{array}\label{eq:input_restrictions}
\end{equation}
\end{subequations}where $\delta\left(z^{1}-A_{\xi}\right)$ denotes
the Dirac delta function at the position $z^{1}=A_{\xi}$ indicating
that the inputs exhibit an infinitesimal spatial distribution. As
a consequence, the formal change of $\mathscr{H}$ follows to
\begin{equation}
\dot{\mathscr{H}}=-\int_{\mathcal{B}}\delta_{\alpha}\left(\mathcal{H}\right)\mathcal{R}^{\alpha\beta}\delta_{\beta}\left(\mathcal{H}\right)\Omega+u^{\xi}y_{\xi}+\left.\left(\dot{x}^{\alpha}\delta_{\alpha}^{\partial,1}\mathcal{H}\right)\right|_{\partial\mathcal{B}}+\left.\left(\dot{x}_{1}^{\alpha}\delta_{\alpha}^{\partial,2}\mathcal{H}\right)\right|_{\partial\mathcal{B}}\,,\label{eq:h_p_dirac}
\end{equation}
with the 0-dimensional boundary ports $\left.\left(\dot{x}^{\alpha}\delta_{\alpha}^{\partial,1}\mathcal{H}\right)\right|_{\partial\mathcal{B}}$
and $\left.\left(\dot{x}_{1}^{\alpha}\delta_{\alpha}^{\partial,2}\mathcal{H}\right)\right|_{\partial\mathcal{B}}$
that vanish again if systems with in-domain actuation solely are considered.
Consequently, it becomes obvious that we only have collocation located
pointwise on the domain, which is visualised by the following example.

\begin{exmp}[Pointwise actuated beam]\label{ex:Euler-Bernoulli_beam_pointwise-actuated}Now,
we consider an Euler-Bernoulli beam with the length $L$ actuated
at $z^{1}=A_{1}$ and $z^{1}=A_{2}$ by means of the forces $u^{1}$
and $u^{2}$, where the governing PDE is given by
\[
\rho A\ddot{w}=-EIw_{\left[4\right]}+\delta\left(z^{1}-A_{1}\right)u^{1}+\delta\left(z^{1}-A_{2}\right)u^{2}\,.
\]
Furthermore, both ends of the beam $\partial\mathcal{B}_{1}=0$ and
$\partial\mathcal{B}_{2}=L$ are free, and therefore, the boundary
conditions read as 
\[
\left.\begin{array}{ccc}
Q & = & 0\\
M & = & 0
\end{array}\right\} \quad\text{for}\quad\partial\mathcal{B}_{1},\partial\mathcal{B}_{2}
\]
with the shear force $Q=-EIw_{\left[3\right]}$ and the bending moment
$M=EIw_{\left[2\right]}$. Consequently, if we use the momentum $p=\rho A\dot{w}$
and the Hamiltonian density
\[
\mathcal{H}=\frac{1}{2\rho A}p^{2}+\frac{1}{2}EI\left(w_{\left[2\right]}\right)^{2}\,,
\]
the system under consideration can be written as
\begin{equation}
\left[\begin{array}{c}
\dot{w}\\
\dot{p}
\end{array}\right]=\left[\begin{array}{cc}
0 & 1\\
-1 & 0
\end{array}\right]\left[\begin{array}{c}
\delta_{w}\mathcal{H}\\
\delta_{p}\mathcal{H}
\end{array}\right]+\left[\begin{array}{cc}
0 & 0\\
\delta\left(z^{1}-A_{1}\right) & \delta\left(z^{1}-A_{2}\right)
\end{array}\right]\left[\begin{array}{c}
u^{1}\\
u^{2}
\end{array}\right],\label{eq:beam_lumped_inputs}
\end{equation}
where due to $y=\mathcal{G}^{*}\rfloor\delta\mathcal{H}$ the collocated
outputs corresponds to $y_{1}=\dot{w}|_{A_{1}}$ and $y_{2}=\dot{w}|_{A_{2}}$,
i.e. to velocities at defined positions. Thus, the formal change (\ref{eq:h_p_dirac})
follows to
\[
\dot{\mathscr{H}}=u^{1}\dot{w}|_{A_{1}}+u^{2}\dot{w}|_{A_{2}}\,,
\]
which states a mechanical power-balance relation.\end{exmp}

In this section, different classes of pH-systems, which also exhibit
different typs of power ports, have been considered. These power ports
shall be used for the proposed control by interconnection methodology
in the following.

\section{In-Domain Control using Structural Invariants\label{sec:In-Domain-Control}}

The aim of this section is to develop a control strategy based on
structural invariants being suitable for the different classes of
pH-systems treated in Section \ref{sec:Infinite-Dimensional-PH-Systems}.
These mentioned categories mainly differ in the dimension of the spatial
domain and the spatial distribution of the actuators; however, they
have in common that the inputs themselves are lumped, and consequently,
in light of this aspect, the application of a finite-dimensional controller
is motivated. Furthermore, in the Subsections \ref{subsec:Structural_Invariants_1}
and \ref{subsec:structural_invariatns_2} we derive casimir conditions
which differ due to the assumptions concerning the spatial distribution
of the actuators.

\subsection{Control by Interconnection}

Next, we adapt the control by interconnection strategy based on structural
invariants, which utilises damping injection and energy shaping in
order to stabilise certain equilibria, to in-domain actuated pH-system
with spatial domain up to dimension two. To achieve the damping-injection
part, the passivity of a pH-controller, coupled by a power-conserving
interconnection structure (PCIS) to the infinite-dimensional plant,
shall be exploited. Moreover, since the aim is to shape the energy
of the closed-loop system, we are interested in a relation between
the plant and the controller, which shall be obtained by means of
Casimir functionals.

Motivated by the lumped input of the considered plants, we use a finite-dimensional
pH-controller given in local coordinates as\begin{subequations}\label{eq:pH_controller}
\begin{align}
\dot{x}_{c}^{\alpha_{c}} & =\left(J_{c}^{\alpha_{c}\beta_{c}}-R_{c}^{\alpha_{c}\beta_{c}}\right)\partial_{\beta_{c}}H_{c}+G_{c,\xi}^{\alpha_{c}}u_{c}^{\xi}\,,\\
y_{c,\xi} & =G_{c,\xi}^{\beta_{c}}\partial_{\beta_{c}}H_{c}\,,\label{eq:controller_output}
\end{align}
\end{subequations}with $\alpha_{c},\beta_{c}=1,\ldots,n_{c}$. As
already mentioned, the main idea is to couple the finite-dimensional
controller to the infinite-dimensional plant in a power-conserving
manner, and therefore, we choose the dimension of the controller in-
and outputs according to $\mathrm{dim}\left(u_{c}\right)=\mathrm{dim}\left(y_{c}\right)=l$.
As the outputs of the plant may (in general) be distributed over (a
part of) the spatial domain, cf. Ex. \ref{ex:Kirchhoff-Love_plate_piezo-actuated},
to allow for a coupling the distributed output densities must be integrated
over $\mathcal{B}$, and therefore, a power-conserving interconnection
can be given by
\begin{equation}
u^{\xi}\int_{\mathcal{B}}y_{\xi}\Omega+u_{c}^{\xi}y_{c,\xi}=0\,.\label{eq:power_conserving_int}
\end{equation}
Here, it should be mentioned that we did not make any restriction
concerning the spatial dimension of the plant yet, i.e. $\mathrm{dim}\left(\mathcal{B}\right)=1,2$.
If we choose the feedback structure according to
\begin{equation}
u_{c}^{\xi}=K^{\xi\eta}\int_{\mathcal{B}}y_{\eta}\Omega\,,\qquad u^{\xi}=-K^{\xi\eta}y_{c,\eta}\,,\label{eq:PCIS_coupling}
\end{equation}
where $K^{\xi\eta}$ denotes the components of an appropriate map
$K$, a PCIS meeting (\ref{eq:power_conserving_int}) is obtained.
Furthermore, the closed loop, that results by using the coupling (\ref{eq:PCIS_coupling}),
is again a (mixed-dimensional) pH-system characterised by the Hamiltonian
$\mathscr{H}_{cl}=\mathscr{H}+H_{c}$. Next, by taking the coupling
(\ref{eq:PCIS_coupling}) into account -- and keeping in mind that
we consider systems with in-domain actuation solely --, a straightforward
calculation yields the formal change of $\mathscr{H}_{cl}$ according
to
\begin{equation}
\dot{\mathscr{H}}_{cl}=-\int_{\mathcal{B}}\delta_{\alpha}(\mathcal{H})\mathcal{R}^{\alpha\beta}\delta_{\beta}(\mathcal{H})\Omega-\partial_{\alpha_{c}}(H_{c})R_{c}^{\alpha_{c}\beta_{c}}\partial_{\beta_{c}}(H_{c})\leq0\,.\label{eq:hp_cl}
\end{equation}
Equ. (\ref{eq:hp_cl}) clearly highlights that we are able to inject
damping into the closed-loop system by means of the pH-controller
(\ref{eq:pH_controller}).\begin{rem}\label{Rem:Stability}It should
be stressed again that detailed stability investigations based on
functional-analytic methods are not in the scope of this contribution.
Instead, we focus on energy considerations, where $\dot{\mathscr{H}}_{cl}\leq0$
implies that the total energy is non-increasing along closed-loop
solutions (provided they exist). Hence, by using $\mathscr{H}_{cl}$
as Lyapunov candidate, the relations $\mathscr{H}_{cl}>0$ and $\dot{\mathscr{H}}_{cl}\leq0$
serve as necessary conditions for stability investigations in the
sense of Lyapunov.\end{rem}However, we are not content with damping
injection only; in particular, we additionally aim to shape the energy
of the closed-loop system. To this end, it is necessary to find a
relation between the plant and (some of) the controller states. Therefore,
in accordance with \cite{Schoeberl2013a,Rams2017a}, we are interested
in Casimir functionals of the form
\begin{equation}
\mathscr{C}^{\lambda}=x_{c}^{\lambda}+\int_{\mathcal{B}}\mathcal{C}^{\lambda}\Omega,\quad\mathcal{C}^{\lambda}\in C^{\infty}\left(\mathcal{J}^{2}\left(\mathcal{E}\right)\right),\label{eq:Casimir_functionals}
\end{equation}
with $\lambda=1,\ldots,\bar{n}\leq n_{c}$; however, it should be
stressed that in this contribution $\mathrm{dim}\left(\mathcal{B}\right)=1,2$
is valid. To serve as conserved quantity, the functionals (\ref{eq:Casimir_functionals})
have to fulfil $\dot{\mathscr{C}}^{\lambda}=0$ independently of $\mathcal{H}$
and $H_{c}$. Apart from that, the requirement $\dot{\mathscr{C}}^{\lambda}=0$
of course depends on the plant under consideration. Consequently,
in the following we distinguish between plants with actuators distributed
over (a part of) the spatial domain and plants with actuators modelled
with an infinitesimal distribution. In light of this aspect, we derive
different conditions for structural invariants depending on the particular
plant category and demonstrate the applicability of the proposed approach
by deriving controllers for the examples treated in Section \ref{sec:Infinite-Dimensional-PH-Systems}.

\subsection{Controller Scenario I\label{subsec:Structural_Invariants_1}}

This subsection deals with the controller design for infinite-dimensional
pH-systems with in-domain actuators that exhibit a spatial distribution.
Based on the findings of the previous subsection, in the following
proposition necessary conditions regarding the controller design for
the system class under consideration shall be given. 

\begin{prop}\label{prop:Casimir_cond_1}Let the interconnection
of the plant (\ref{eq:pH_system_local_rep}), where $\mathrm{dim}\left(\mathcal{B}\right)=2$
is valid, and the controller (\ref{eq:pH_controller}) be given by
(\ref{eq:PCIS_coupling}). Then, if the functionals (\ref{eq:Casimir_functionals})
meet the conditions\begin{subequations}\label{eq:Casimir_cond_1_}
\begin{align}
(J_{c}^{\lambda\beta_{c}}-R_{c}^{\lambda\beta_{c}}) & =0\label{eq:Casimir_cond_cont_map}\\
\delta_{\alpha}\mathcal{C}^{\lambda}(\mathcal{J}^{\alpha\beta}-\mathcal{R}^{\alpha\beta})+G_{c,\xi}^{\lambda}K^{\xi\eta}\mathcal{G}_{\eta}^{\beta} & =0\label{eq:Casimir_in_domain_condition}\\
\delta_{\alpha}\mathcal{C}^{\lambda}\mathcal{G}_{\xi}^{\alpha}K^{\xi\eta}G_{c,\eta}^{\alpha_{c}} & =0\label{eq:Casimir_cond_uncoup}\\
(\dot{x}^{\alpha}\delta_{\alpha}^{\partial,1}\mathcal{C}^{\lambda}+\dot{x}_{\left[01\right]}^{\alpha}\delta_{\alpha}^{\partial,2}\mathcal{C}^{\lambda})|_{\partial\mathcal{B}} & =0\label{eq:Casimir_boundary_condition}
\end{align}
\end{subequations} for $\lambda=1,\ldots,\bar{n}\leq n_{c}$, they
qualify as structural invariants of the closed loop.\end{prop}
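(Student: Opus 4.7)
The plan is to differentiate the candidate $\mathscr{C}^\lambda$ along closed-loop trajectories and to require $\dot{\mathscr{C}}^\lambda\equiv 0$ as an identity in the two Hamiltonians $\mathcal{H}$ and $H_c$. Splitting $\mathscr{C}^\lambda$ as in \eqref{eq:Casimir_functionals} gives $\dot{\mathscr{C}}^\lambda = \dot{x}_c^\lambda + \int_\mathcal{B}\mathrm{L}_{j^2(\dot{x})}(\mathcal{C}^\lambda\Omega)$, and since $\mathcal{C}^\lambda\in C^\infty(\mathcal{J}^2(\mathcal{E}))$ is itself a 2nd-order density, the decomposition Theorem~\ref{thm:decomposition} applies verbatim with $\mathcal{C}^\lambda$ in place of $\mathcal{H}$ and $\dot{x}$ in place of $v$, producing a domain piece together with two boundary pieces weighted by $\delta^{\partial,1}\mathfrak{C}^\lambda$ and $\delta^{\partial,2}\mathfrak{C}^\lambda$. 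Into the result I would substitute the closed-loop expressions obtained from \eqref{eq:pH_system_local_rep}, \eqref{eq:pH_controller} and the coupling \eqref{eq:PCIS_coupling}, namely
\begin{align*}
\dot{x}^\alpha &= (\mathcal{J}^{\alpha\beta}-\mathcal{R}^{\alpha\beta})\delta_\beta\mathcal{H} - \mathcal{G}_\xi^\alpha K^{\xi\eta}G_{c,\eta}^{\beta_c}\partial_{\beta_c}H_c,\\
\dot{x}_c^\lambda &= (J_c^{\lambda\beta_c}-R_c^{\lambda\beta_c})\partial_{\beta_c}H_c + G_{c,\xi}^\lambda K^{\xi\eta}\int_\mathcal{B}\mathcal{G}_\eta^\beta\delta_\beta\mathcal{H}\,\Omega.
\end{align*}

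Next I would sort $\dot{\mathscr{C}}^\lambda$ according to the free multipliers it carries and impose pointwise vanishing of each block, exploiting that $\mathcal{H}$ and $H_c$ may be chosen independently and arbitrarily. The purely finite-dimensional coefficient of $\partial_{\beta_c}H_c$ yields \eqref{eq:Casimir_cond_cont_map}. The domain integral assembles as $\int_\mathcal{B}\bigl[\delta_\alpha\mathcal{C}^\lambda(\mathcal{J}^{\alpha\beta}-\mathcal{R}^{\alpha\beta}) + G_{c,\xi}^\lambda K^{\xi\eta}\mathcal{G}_\eta^\beta\bigr]\delta_\beta\mathcal{H}\,\Omega$, whose pointwise vanishing for arbitrary $\mathcal{H}$ produces \eqref{eq:Casimir_in_domain_condition}; the mixed summand inside the brackets originates from the distributed character of $u_c$. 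The remaining $\partial_{\beta_c}H_c$-weighted domain integral, coming from the pointwise action of $u^\xi=-K^{\xi\eta}y_{c,\eta}$ on the plant via $\mathcal{G}_\xi^\alpha$, must vanish pointwise and gives \eqref{eq:Casimir_cond_uncoup}. Finally, the two boundary integrals returned by Theorem~\ref{thm:decomposition} applied to $\mathcal{C}^\lambda$ directly yield \eqref{eq:Casimir_boundary_condition}.

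The main obstacle will be the careful index bookkeeping around the asymmetric coupling \eqref{eq:PCIS_coupling}: $u_c^\xi$ carries a spatial integral of the distributed plant output while $u^\xi$ enters the plant pointwise through $\mathcal{G}_\xi^\alpha$, so the distributed input map contributes once to the finite-dimensional controller equation and once to the distributed plant equation, with the two contributions landing in structurally different blocks of $\dot{\mathscr{C}}^\lambda$. Arranging the calculation so that the mixed term $G_{c,\xi}^\lambda K^{\xi\eta}\mathcal{G}_\eta^\beta$ ends up inside the domain integral next to $\delta_\alpha\mathcal{C}^\lambda(\mathcal{J}^{\alpha\beta}-\mathcal{R}^{\alpha\beta})$ — thereby producing the non-trivial in-domain Casimir condition \eqref{eq:Casimir_in_domain_condition} — is the crux of the argument; everything else is routine regrouping, with the pointwise versions of the distributed conditions serving as the natural design-oriented sufficient way to enforce the corresponding integrals to vanish identically in $\mathcal{H}$ and $H_c$.
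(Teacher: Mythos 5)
Your proposal is correct and follows essentially the same route as the paper's proof: apply the decomposition Theorem~\ref{thm:decomposition} to $\mathcal{C}^{\lambda}\Omega$ in place of $\mathcal{H}\Omega$, substitute the closed-loop dynamics obtained from (\ref{eq:pH_system_local_rep}), (\ref{eq:pH_controller}) and the coupling (\ref{eq:PCIS_coupling}), and group the result by the independent multipliers $\partial_{\beta_c}H_c$ and $\delta_{\beta}\mathcal{H}$ plus the boundary terms to read off the four conditions. Your explicit closed-loop substitutions and the placement of the mixed term $G_{c,\xi}^{\lambda}K^{\xi\eta}\mathcal{G}_{\eta}^{\beta}$ inside the domain integral match the paper's displayed computation (\ref{eq:C_dot_subs}) exactly.
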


\begin{proof}To prove the requirement $\dot{\mathscr{C}}^{\lambda}=0$,
we exploit the decomposition Theorem \ref{thm:decomposition} --
where we use $\mathcal{C}^{\lambda}\Omega$ instead of $\mathcal{H}\Omega$
now --, and consequently, the formal change of (\ref{eq:Casimir_functionals})
follows to
\[
\dot{\mathscr{C}}^{\lambda}=\dot{x}_{c}^{\lambda}+\int_{\mathcal{B}}\dot{x}^{\alpha}\delta_{\alpha}\mathcal{C}^{\lambda}\Omega+\int_{\partial\mathcal{B}}(\dot{x}^{\alpha}\delta_{\alpha}^{\partial,1}\mathcal{C}^{\lambda}+\dot{x}_{\left[01\right]}^{\alpha}\delta_{\alpha}^{\partial,2}\mathcal{C}^{\lambda})\bar{\Omega}_{2}\,.
\]
Then, by taking into account the dynamics of the plant (\ref{eq:pH_system_local_rep})
and the controller (\ref{eq:pH_controller}), as well as the coupling
(\ref{eq:PCIS_coupling}), we end up with
\begin{multline}
\dot{\mathscr{C}}^{\lambda}=(J_{c}^{\alpha_{c}\beta_{c}}-R_{c}^{\alpha_{c}\beta_{c}})\partial_{\beta_{c}}H_{c}+\int_{\mathcal{B}}(\delta_{\alpha}\mathcal{C}^{\lambda}(\mathcal{J}^{\alpha\beta}-\mathcal{R}^{\alpha\beta})+G_{c,\xi}^{\lambda}K^{\xi\eta}\mathcal{G}_{\eta}^{\beta})\delta_{\beta}\mathcal{H}\Omega\\
-\int_{\mathcal{B}}\delta_{\alpha}\mathcal{C}^{\lambda}\mathcal{G}_{\xi}^{\alpha}K^{\xi\eta}G_{c,\eta}^{\alpha_{c}}\partial_{\alpha_{c}}H_{c}\Omega+\int_{\partial\mathcal{B}}(\dot{x}^{\alpha}\delta_{\alpha}^{\partial,1}\mathcal{C}^{\lambda}+\dot{x}_{\left[01\right]}^{\alpha}\delta_{\alpha}^{\partial,2}\mathcal{C}^{\lambda})\bar{\Omega}_{2}=0\,,\label{eq:C_dot_subs}
\end{multline}
which yields exactly the conditions given in Prop. \ref{prop:Casimir_cond_1}.\end{proof}Now,
it is of interest to interpret the results of Prop. \ref{prop:Casimir_cond_1},
where we have the remarkable fact that -- in general -- the conditions
(\ref{eq:Casimir_cond_cont_map})--(\ref{eq:Casimir_cond_uncoup})
holds for systems with $1$- or $2$-dimensional spatial domain, cf.
\cite[Eqs. (21a)-(21c)]{Malzer2019}. Nevertheless, the differences
are hidden in the geometric objects and operators that of course strongly
depend on the dimension of the spatial domain. Furthermore, condition
(\ref{eq:Casimir_in_domain_condition}) -- and this is a major difference
compared to boundary-control schemes -- enables to relate controller
states with plant states within the spatial domain. However, condition
(\ref{eq:Casimir_boundary_condition}) -- making heavy use of the
boundary operators $\delta_{\alpha}^{\partial,1}$ and $\delta_{\alpha}^{\partial,2}$,
cf. Rem. \ref{Rem:Boundary_operators} -- describes the fact that
we are not able to find relations restricted to the boundary, which
is a consequence of the circumstance that systems with in-domain actuation
solely are considered. Next, the applicability of the proposed control
strategy shall be demonstrated by developing a Casimir-controller
for the piezo-actuated Kirchhoff-Love plate.

\begin{exmp}[Energy-Casimir controller for Ex. 1]\label{ex:Casimir_controller_plate}Now,
we intend to exploit the pH-system representation of the piezo-actuated
Kirchhoff-Love plate given in (\ref{eq:pH_representation_piezoactuated_plate})
in order to derive an energy-based control law. The aim is to move
the plate from the initial position $w_{0}(z^{1},z^{2})=0$ to the
special rest postion
\begin{equation}
w^{d}\!=\!\left\{ \!\begin{array}{ll}
a(z^{1})^{2}k(z^{2}) & \!\text{for}\;0\leq z^{1}<z_{b}^{1}\\
(b(z^{1}\!-\!z_{b}^{1})\!+\!a(z_{b}^{1})^{2})k(z^{2})\! & \!\text{for}\;z_{b}^{1}\leq z^{1}\leq L^{1}
\end{array}\right.\label{eq:desired_equilibrium_plate}
\end{equation}
with $k(z^{2})=-c+dz^{2}$, $z_{b}^{1}=z_{p}^{1}+L_{p}^{1}$ and $a,b,c,d\in\mathbb{R}$.
To this end, by considering the dimension of the output densities
(\ref{eq:plate_output_densities}), 2 controller states shall be related
to the plant. To fulfil the conditions (\ref{eq:Casimir_cond_1_}),
we choose $\mathcal{C}^{1}=-g_{21}(z^{1},z^{2})w$ and $\mathcal{C}^{2}=-g_{22}(z^{1},z^{2})w$
fixing a part of the controller mappings $J_{c}$, $R_{c}$ and $G_{c}$
because we set $K^{\xi\eta}=\delta^{\xi\eta}$ with the Kronecker-Delta
symbol meeting $\delta^{\xi\eta}=1$ for $\xi=\eta$ and $\delta^{\xi\eta}=0$
for $\xi\neq\eta$. Furthermore, this ansatz allows for a relation
between the plant and the first $2$ controller states as it yields\begin{subequations}\label{eq:casimirs_plate}
\begin{align}
x_{c}^{1} & =\int_{\mathcal{B}}g_{21}(z^{1},z^{2})w\Omega\,,\\
x_{c}^{2} & =\int_{\mathcal{B}}g_{22}(z^{1},z^{2})w\Omega\,,
\end{align}
\end{subequations}by choosing appropriate initial states for the
controller. Compared to boundary-control schemes, the relations (\ref{eq:casimirs_plate})
are a major difference as the controller states $x_{c}^{1}$ and $x_{c}^{2}$
corresponds to a plant state that is weighted and integrated over
the 2-dimensional spatial domain, whereas boundary controller exploit
plant states restricted to the actuated boundary. Note that we have
not determined the dimension of the controller, which can be interpreted
as degree of freedom, yet. In this regard, damping shall be injected
into the closed-loop system by means of $2$ further controller states,
and therefore, we set $n_{c}=4$. Keeping the preceding facts in mind,
we find that the controller dynamics are restricted to the mappings\begin{subequations}\label{eq:Cont_Dyn_plate}
\begin{align}
J_{c}-R_{c} & =\left[\begin{array}{cccc}
0 & 0 & 0 & 0\\
0 & 0 & 0 & 0\\
0 & 0 & -R_{c}^{33} & J_{c}^{34}-R_{c}^{34}\\
0 & 0 & -J_{c}^{34}-R_{c}^{34} & -R_{c}^{44}
\end{array}\right]\,,\\
G_{c} & =\left[\begin{array}{cccc}
1 & 0 & G_{c,1}^{3} & G_{c,1}^{4}\\
0 & 1 & G_{c,2}^{3} & G_{c,2}^{4}
\end{array}\right]^{T}\,.
\end{align}
\end{subequations}Next, it remains to assign the Hamiltonian of the
controller. Here, it should be mentioned that the equilibrium (\ref{eq:desired_equilibrium_plate})
is one that requires non-zero power, and therefore, we have to include
an appropriate term in the controller Hamiltonian. Furthermore, we
intend to obtain a minimum of the closed-loop Hamiltonian $\mathscr{H}_{cl}=\int_{\mathcal{B}}\mathcal{H}\Omega+H_{c}$
that involves the desired equilibrium (\ref{eq:desired_equilibrium_plate}).
To this end, we exploit the relations
\begin{align*}
x_{c}^{1,d} & =\int_{\mathcal{B}}g_{21}\left(z^{1},z^{2}\right)w^{d}\Omega\,,\\
x_{c}^{2,d} & =\int_{\mathcal{B}}g_{22}\left(z^{1},z^{2}\right)w^{d}\Omega\,,
\end{align*}
which are a consequence of (\ref{eq:casimirs_plate}), and choose
\[
H_{c}=\frac{c_{1}}{2}(x_{c}^{1}-x_{c}^{1,d}-\frac{u_{s}^{1}}{c_{1}})^{2}+\frac{c_{2}}{2}(x_{c}^{2}-x_{c}^{2,d}-\frac{u_{s}^{2}}{c_{2}})^{2}+\frac{1}{2}M_{c,\mu_{c}\nu_{c}}x_{c}^{\mu_{c}}x_{c}^{\nu_{c}},
\]
with the positive definite matrix $\left[M_{c}\right]$, $M_{c,\mu_{c}\nu_{c}}\in\mathbb{R}$
for $\mu_{c},\nu_{c}=3,4$ and the positive constants $c_{1},c_{2}>0$.
As already mentioned, we have chosen $K^{\xi\eta}=\delta^{\xi\eta}$,
which yields the PCIS
\[
\begin{array}{cccccc}
u_{c}^{1} & = & \int_{\mathcal{B}}g_{21}(z^{1},z^{2})\dot{w}\Omega,\quad & u^{1} & = & y_{c,1},\\
u_{c}^{2} & = & \int_{\mathcal{B}}g_{22}(z^{1},z^{2})\dot{w}\Omega,\quad & u^{2} & = & y_{c,2},
\end{array}
\]
and consequently, the formal change of $\mathscr{H}_{cl}$ follows
to
\[
\dot{\mathscr{H}}_{cl}=-x_{c}^{\mu_{c}}M_{c,\mu_{c}\nu_{c}}R_{c}^{\nu_{c}\rho_{c}}M_{c,\rho_{c}\vartheta_{c}}x_{c}^{\vartheta_{c}}\leq0,
\]
with $\rho_{c},\vartheta_{c}=3,4$. As we do not carry out extensive
stability investigations, cf. Rem. \ref{Rem:Stability}, the simulation
results given in the Figs. \ref{fig:final_state_plate} and \ref{fig:upper_boundary}
are used to verify the applicability of the proposed approach. In
Fig. \ref{fig:final_state_plate}, the final plate deflection $w$
is depicted over the spatial domain $(z^{1},z^{2})$. 
\begin{figure}

\centering
\input{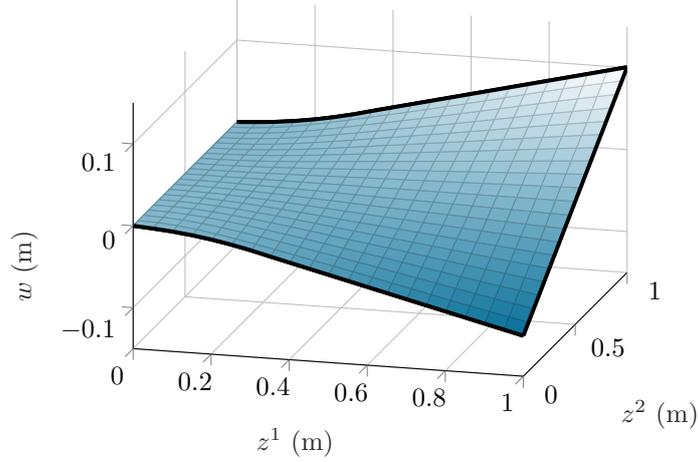}\caption{\label{fig:final_state_plate}Simulation result for the final plate
deflection $w$ over the spatial domain $(z^{1},z^{2})$.}

\end{figure}
Here, for the sake of simplicity, all plate parameters are set to
$1$, expect for the Poisson's ratio $\nu=0.2$. Furthermore, regarding
the desired equilibrium (\ref{eq:desired_equilibrium_plate}) we have
chosen $a=0.16$, $b=0.12$, $c=1$ and $d=2$. The MFC patches with
$L_{p}^{1}=L_{p}^{2}=0.25$ are placed at $z_{p}^{1}=0.25$, $z_{p,1}^{2}=0.1$
and $z_{p,2}^{1}=0.65$, see Fig. \ref{fig:Schematic_Kirchhoff_Love_plate}.
The controller parameters have been chosen as $J_{c}^{34}=1$, $R_{c}^{34}=-1$,
$R_{c}^{33}=200$, $R_{c}^{44}=150$, $M_{c}^{33}=M_{c}^{44}=10^{4}$,
$M_{c}^{34}=0$, $G_{c}^{31}=G_{c}^{41}=100$, $G_{c}^{32}=G_{c}^{42}=0$
and $c_{1}=c_{2}=0.1$. Worth stressing is the fact that the finite
difference-coefficient method has been applied as discretisation scheme,
where each direction of the plate have been divided into $20$ intervals.
\begin{figure}

\centering\input{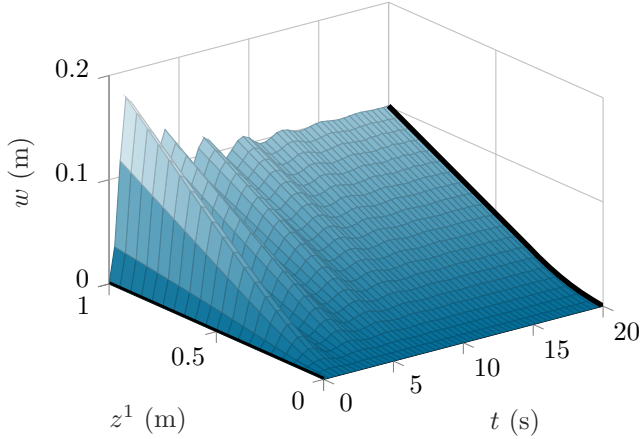}\caption{\label{fig:upper_boundary}Simulation results for the deflection $w$
of the edge $\partial\mathcal{B}_{4}$ over time $t$ and spatial
coordinate $z^{1}$.}

\end{figure}
\end{exmp} Having demonstrated the capability of the proposed control
scheme for pH-systems with 2-dimensional spatial domain, in the following
subsection a dynamic controller for pH-systems, actuated pointwise
within the ($1$-dimensional) spatial domain, is derived.

\subsection{Controller Scenario II\label{subsec:structural_invariatns_2}}

In this subsection, we restrict ourselves to systems described by
(\ref{eq:Dirac_Klasse}), which has the consequence that modified
conditions, being suitable for the system class under consideration,
can be deduced. Furthermore, an energy-based controller stabilising
a certain rest position shall be derived, see \cite[Subsection 3.2.5]{Rams2018}.

\begin{prop}\label{Prop:Casimir_cond_2}For the closed-loop system
that is obtained by an interconnection of the certain system class
(\ref{eq:Dirac_Klasse}) and the controller (\ref{eq:pH_controller})
via the coupling (\ref{eq:PCIS_coupling}), the functionals (\ref{eq:Casimir_functionals})
have to meet the conditions\begin{subequations}\label{eq:Casimir_conditions_2_}
\begin{align}
(J_{c}^{\lambda\beta_{c}}-R_{c}^{\lambda\beta_{c}}) & =0\\
\delta_{\alpha}\mathcal{C}^{\lambda}(\mathcal{J}^{\alpha\gamma}-\mathcal{R}^{\alpha\gamma}) & =0\label{eq:Casimir_cond_2_unact_states}\\
\delta_{\alpha}\mathcal{C}^{\lambda}(\mathcal{J}^{\alpha\rho}-\mathcal{R}^{\alpha\rho})+(G_{c,\xi}^{\lambda}K^{\xi\rho})|_{A_{\xi}^{i}} & =0\label{eq:Casimir_2_in_domain_condition}\\
\delta_{\alpha}\mathcal{C}^{\lambda}\mathcal{G}_{\xi}^{\alpha}K^{\xi\eta}G_{c,\eta}^{\alpha_{c}} & =0\\
(\dot{x}^{\alpha}\delta_{\alpha}^{\partial,1}\mathcal{C}^{\lambda}+\dot{x}_{\left[1\right]}^{\alpha}\delta_{\alpha}^{\partial,2}\mathcal{C}^{\lambda})|_{\partial\mathcal{B}} & =0
\end{align}
\end{subequations}for $\gamma=1,\ldots,n_{1}$ and $\rho=n_{1}+1,\ldots,n$
to qualify as structural invariants.\end{prop}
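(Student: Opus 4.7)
The plan is to mimic the proof of Proposition~\ref{prop:Casimir_cond_1}, but adapted to the $1$-dimensional spatial domain and the specific input structure (\ref{eq:input_restrictions}), exploiting the sifting property of the Dirac delta to convert distributed integrals into point evaluations.

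First I would apply the decomposition Theorem~\ref{thm:decomposition} (in its $1$-dimensional version, with boundary operators as in (\ref{eq:boundary_operators_1D})) to the Casimir-candidate functional (\ref{eq:Casimir_functionals}) with $\mathcal{C}^{\lambda}\Omega$ in place of $\mathcal{H}\Omega$. This yields
\[
\dot{\mathscr{C}}^{\lambda}=\dot{x}_{c}^{\lambda}+\int_{\mathcal{B}}\dot{x}^{\alpha}\delta_{\alpha}\mathcal{C}^{\lambda}\Omega+\left.\bigl(\dot{x}^{\alpha}\delta_{\alpha}^{\partial,1}\mathcal{C}^{\lambda}+\dot{x}_{\left[1\right]}^{\alpha}\delta_{\alpha}^{\partial,2}\mathcal{C}^{\lambda}\bigr)\right|_{\partial\mathcal{B}}\,,
\]
which is the direct $1$D analogue of the expression appearing in (\ref{eq:C_dot_subs}).

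Next I would substitute the plant dynamics (\ref{eq:Dirac_Klasse}), the controller dynamics (\ref{eq:pH_controller}) and the PCIS (\ref{eq:PCIS_coupling}). At this step the crucial idea is to split the index $\alpha$ on the plant side into the unactuated block $\gamma=1,\ldots,n_{1}$ and the actuated block $\rho=n_{1}+1,\ldots,n$, so that the input contribution to $\dot{x}^{\alpha}\delta_{\alpha}\mathcal{C}^{\lambda}$ only enters through $\delta_{\rho}\mathcal{C}^{\lambda}\mathcal{G}_{\xi}^{\rho}u^{\xi}=\delta_{\rho}\mathcal{C}^{\lambda}\delta(z^{1}-A_{\xi})u^{\xi}$. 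Integrating against $\Omega=\mathrm{d}z^{1}$ and using the sifting property collapses this into a pointwise term $\bigl(\delta_{\rho}\mathcal{C}^{\lambda}\bigr)|_{z^{1}=A_{\xi}}\,u^{\xi}$. On the other hand, the feedback $u^{\xi}=-K^{\xi\eta}y_{c,\eta}=-K^{\xi\eta}G_{c,\eta}^{\alpha_{c}}\partial_{\alpha_{c}}H_{c}$, so these contributions show up as coefficients of $\partial_{\alpha_{c}}H_{c}$, and the corresponding coupling coefficient $G_{c,\xi}^{\lambda}K^{\xi\eta}$ likewise has to be evaluated at $A_{\xi}^{i}$, producing the annotation $|_{A_{\xi}^{i}}$ in (\ref{eq:Casimir_2_in_domain_condition}).

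Finally I would collect terms and invoke the requirement that $\dot{\mathscr{C}}^{\lambda}=0$ must hold independently of $\mathcal{H}$ and $H_{c}$: the coefficients of $\partial_{\beta_{c}}H_{c}$, of each $\delta_{\gamma}\mathcal{H}$ (unactuated) and each $\delta_{\rho}\mathcal{H}$ (actuated), of $\partial_{\alpha_{c}}H_{c}$ in the mixed term, and the boundary expression must each vanish separately. This separation yields precisely the five conditions in (\ref{eq:Casimir_conditions_2_}). The main obstacle, compared with Proposition~\ref{prop:Casimir_cond_1}, is the careful bookkeeping associated with the Dirac-delta input: one has to verify that the product $\delta_{\alpha}\mathcal{C}^{\lambda}\mathcal{G}_{\xi}^{\alpha}u^{\xi}$ makes sense pointwise (which is why $\delta_{\rho}\mathcal{C}^{\lambda}$ is assumed smooth enough to be evaluated at $A_{\xi}$) and that the index split between $\gamma$ and $\rho$ is propagated consistently, so that (\ref{eq:Casimir_cond_2_unact_states}) reflects the absence of input in the unactuated block while (\ref{eq:Casimir_2_in_domain_condition}) carries the Dirac-localised input contribution.
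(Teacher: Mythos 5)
Your proposal is correct and follows essentially the same route as the paper: the paper's own proof simply writes down the analogue of the expression from Proposition~\ref{prop:Casimir_cond_1} with the $0$-dimensional boundary terms and then states that substituting the input restrictions (\ref{eq:input_restrictions}) yields the conditions immediately. You in fact supply more detail than the paper does, in particular the explicit $\gamma$/$\rho$ index split and the Dirac-delta sifting that collapses the domain integrals to point evaluations at $A_{\xi}$, which is exactly the step the paper leaves implicit.
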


\begin{proof}The proof follows the intention of those of Prop. \ref{prop:Casimir_cond_1},
and consequently, we have
\begin{multline}
\dot{\mathscr{C}}^{\lambda}=(J_{c}^{\alpha_{c}\beta_{c}}-R_{c}^{\alpha_{c}\beta_{c}})\partial_{\beta_{c}}H_{c}+\int_{\mathcal{B}}(\delta_{\alpha}\mathcal{C}^{\lambda}(\mathcal{J}^{\alpha\beta}-\mathcal{R}^{\alpha\beta})+G_{c,\xi}^{\lambda}K^{\xi\eta}\mathcal{G}_{\eta}^{\beta})\delta_{\beta}\mathcal{H}\Omega\\
-\int_{\mathcal{B}}\delta_{\alpha}\mathcal{C}^{\lambda}\mathcal{G}_{\xi}^{\alpha}K^{\xi\eta}G_{c,\eta}^{\alpha_{c}}\partial_{\alpha_{c}}H_{c}\Omega+(\dot{x}^{\alpha}\delta_{\alpha}^{\partial,1}\mathcal{C}^{\lambda}+\dot{x}_{\left[1\right]}^{\alpha}\delta_{\alpha}^{\partial,2}\mathcal{C}^{\lambda})|_{\partial\mathcal{B}}\,,\label{eq:C_dot_subs-1}
\end{multline}
now. Consequently, by substituting the specific input restrictions
(\ref{eq:input_restrictions}) in (\ref{eq:C_dot_subs-1}), the proof
follows immediately.\end{proof}

Next, it remains to draw conclusions to the findings of Prop. \ref{prop:Casimir_cond_1}.
Similar to (\ref{eq:Casimir_in_domain_condition}), the condition
(\ref{eq:Casimir_2_in_domain_condition}) allows for a relation between
the controller and the plant that is now restricted to a certain position
of the spatial domain due to the specific input assignment we made.
Furthermore, this assignment implies condition (\ref{eq:Casimir_cond_2_unact_states})
for the system states where no input is acting.

With the preceding findings in mind, the pointwise actuated beam of
Ex. \ref{ex:Euler-Bernoulli_beam_pointwise-actuated} shall be used
to demonstrate the proposed approach, see also \cite[Subsection 3.2.5]{Rams2018}.

\begin{exmp}[Energy-Casimir controller for Ex. 2]Now, a controller
that stabilises the desired equilibrium 
\begin{equation}
w^{d}=az^{1}+b,\quad w_{\left[1\right]}^{d}=a\label{eq:equilib_dirac_beam}
\end{equation}
for the pointwise actuated beam of Ex. \ref{ex:Euler-Bernoulli_beam_pointwise-actuated}
shall be derived. There, we consider actuators and sensors with infinitesimal
distribution, and for the particular example the outputs are velocities
at defined positions. As a consequence, the coupling (\ref{eq:PCIS_coupling})
reduces to $u_{c}^{\xi}=\delta^{\xi\eta}y_{\eta}$ and $u^{\xi}=-\delta^{\xi\eta}y_{c,\eta}$
with $\delta^{\xi\eta}$ denoting the Kronecker-Delta symbol for $\xi,\eta=1,2$.
Moreover, due to the fact that two pointwise outputs are present,
we aim to relate $2$ controller states to the plant. To be able to
inject damping into the closed-loop system, we choose the controller
dimension to $n_{c}=4$. With regard to our control objectives, we
set $\mathcal{C}^{1}=-\delta(z^{1}-A_{1})w$ and $\mathcal{C}^{2}=-\delta(z^{1}-A_{2})w$,
where straightforward calculations show that they satisfy the conditions
(\ref{eq:Casimir_conditions_2_}) and yield the important relations
$x_{c}^{1}=w|_{A_{1}}$, $x_{c}^{2}=w|_{A_{2}}$, implying the remarkable
fact that we have the same structure for the controller dynamics as
in Ex. \ref{ex:Casimir_controller_plate}, see (\ref{eq:Cont_Dyn_plate}),
even though the problem is quite different. If we consider $x_{c}^{1,d}=w^{d}|_{A_{1}}=aA_{1}+b$,
$x_{c}^{2,d}=w^{d}|_{A_{2}}=aA_{2}+b$ and choose
\[
H_{c}=\frac{c_{1}}{2}(x_{c}^{1}-x_{c}^{1,d})^{2}+\frac{c_{2}}{2}(x_{c}^{2}-x_{c}^{2,d})^{2}+\frac{1}{2}M_{c,\mu_{c}\nu_{c}}x_{c}^{\mu_{c}}x_{c}^{\nu_{c}},
\]
with the positive definite matrix $\left[M_{c}\right]$, $M_{c,\mu_{c},\nu_{c}}\in\mathbb{R}$
for $\mu_{c},\nu_{c}=3,4$, and the positive constants $c_{1},c_{2}>0$,
the equilibrium (\ref{eq:equilib_dirac_beam}) becomes a part of the
minimum of
\begin{equation}
\mathscr{H}_{cl}=\int_{\mathcal{B}}(\frac{1}{2}EI(w_{\left[2\right]})^{2}+\frac{1}{2\rho A}p^{2})\Omega+H_{c}.\label{eq:H_cl_dirac}
\end{equation}
The positive definiteness of (\ref{eq:H_cl_dirac}) together with
\[
\dot{\mathscr{H}}_{cl}=-x_{c}^{\mu_{c}}M_{c,\mu_{c}\nu_{c}}R_{c}^{\nu_{c}\rho_{c}}M_{c,\rho_{c}\vartheta_{c}}x_{c}^{\vartheta_{c}}\leq0
\]
for $\mu_{c},\nu_{c}=3,4$, yield necessary conditions for the stability
of the desired equilibrium (\ref{eq:equilib_dirac_beam}), cf. Rem
\ref{Rem:Stability}.\end{exmp}

\section{Summary and Outlook}

In this paper, a control methodology based on structural invariants,
that is able to cope with in-domain actuated pH-systems with spatial
domain up to dimension two, has been presented. We restricted ourselves
to the scenario of lumped inputs and exploited a certain PCIS to deal
with the distributed output densities that (may) arise due to the
spatial distribution of the actuators. Furthermore, as discussed in
Ex. \ref{ex:Casimir_controller_plate}, as discretisation scheme the
finite difference-quotient method has been applied as it allows to
easily include in-domain inputs. However, this discretisation method
also has some drawbacks like the quadratically rising complexity,
and therefore, in future investigations we shall adapt more sophisticated
-- like e.g. structure preserving -- discretisation schemes for
spatially higher dimensional systems with in-domain actuation to our
framework.

\bibliography{my_bib}

\begin{thebibliography}{10}
\expandafter\ifx\csname url\endcsname\relax
  \def\url#1{\texttt{#1}}\fi
\expandafter\ifx\csname urlprefix\endcsname\relax\def\urlprefix{URL }\fi
\expandafter\ifx\csname href\endcsname\relax
  \def\href#1#2{#2} \def\path#1{#1}\fi

\bibitem{Schaft2000}
A.~J. van~der Schaft, {L2-Gain and Passivity Techniques in Nonlinear Control},
  {Springer}, 2000.

\bibitem{Ortega2001}
R.~Ortega, A.~J. van~der Schaft, I.~Mareels, B.~Maschke, Putting energy back in
  control, IEEE Control Syst. Mag. 21~(2) (2001) 18--33.

\bibitem{Schaft2002}
A.~J. van~der Schaft, B.~Maschke, {Hamiltonian formulations of distributed
  parameter systems with boundary energy flow}, {Journal of Geometry and
  Physics} 42~(1-2) (2002) 166--194.

\bibitem{Gorrec2005}
Y.~L. Gorrec, H.~J. Zwart, B.~Maschke, Dirac structures and boundary control
  systems associated with skew-symmetric differential operators, {SIAM J.
  Control Optim.} 44~(5) (2005) 1864--1892.

\bibitem{Ennsbrunner2005}
H.~Ennsbrunner, K.~Schlacher, {On the Geometrical Representation and
  Interconnection of Infinite Dimensional Port Controlled Hamiltonian Systems},
  {Proceedings of the 44th IEEE Conference on Decision and Control and the
  European Control Conference}~(5263--5268).

\bibitem{Schoeberl2014a}
M.~Sch{\"o}berl, A.~Siuka, {Jet bundle formulation of infinite-dimensional
  port-Hamiltonian systems using differential operators}, {Automatica} 50~(2)
  (2014) 607--613.

\bibitem{Jacob2012}
B.~Jacob, H.~J. Zwart, {Linear Port-Hamiltonian Systems on Infinite-dimensional
  Spaces}, {Birkh{\"a}user}, 2012.

\bibitem{Macchelli2004}
A.~Macchelli, C.~Melchiorri, {Modeling and control of the Timoshenko beam. The
  distributed port Hamiltonian approach}, {SIAM J. Control Optim.} 43~(2)
  (2004) 743--767.

\bibitem{Macchelli2017}
A.~Macchelli, Y.~L. Gorrec, H.~Ramirez, H.~Zwart, {On the Synthesis of Boundary
  Control Laws for Distributed Port-Hamiltonian Systems}, {IEEE Trans. Autom.
  Control} 62~(4) (2017) 1700--1713.

\bibitem{Schoeberl2013a}
M.~Sch{\"o}berl, A.~Siuka, {On Casimir Functionals for infinite-dimensional
  Port-Hamiltonian Control Systems}, IEEE Transactions on Automatic Control
  58~(7) (2013) 1823--1828.

\bibitem{Rams2017a}
H.~Rams, M.~Sch{\"o}berl, {On Structural Invariants in the Energy Based Control
  of Port-Hamiltonian Systems with Second-Order Hamiltonian}, in: {Proceedings
  of the American Control Conference (ACC)}, 2017, pp. 1139--1144.

\bibitem{TrangVu2017}
N.~T. Vu, V.~Trenchant, H.~Ramirez, L.~Lef{\`e}vre, Y.~L. Gorrec, {Parabolic
  matching of hyperbolic system using Control by Interconnection}, in:
  {Proceedings of the 20th IFAC World Congress, Toulouse, France}, 2017.

\bibitem{Trenchant2017}
V.~Trenchant, T.~Vu, H.~Ramirez, L.~Lef{\`e}vre, Y.~L. Gorrec, {On the use of
  structural invariants for the distributed control of infinite dimensional
  port-Hamiltonian systems}, in: {Proceedings of the 56nd IEEE Conference on
  Decision and Control (CDC), Melbourne, Australia}, 2017, pp. 47--52.

\bibitem{Malzer2019}
T.~Malzer, H.~Rams, M.Sch{\"o}berl, {Energy-Based In-Domain Control of a
  Piezo-Actuated Euler-Bernoulli Beam}, in: {Proceedings of the 3rd IFAC
  Workshop on Control of Systems Governed by Partial Differential Equations
  (CPDE)}, {IFAC-PapersOnLine}, 2019, pp. 147--152.

\bibitem{Schoeberl2018}
M.~Sch\"oberl, K.~Schlacher, {On the extraction of the boundary conditions and
  the boundary ports in second-order field theories}, {Journal of Mathematical
  Physics} 59, 102902 (2018).

\bibitem{Saunders1989}
D.~J. Saunders, {The Geometry of Jet Bundles}, {Cambridge University Press},
  1989.

\bibitem{Schoeberl2008a}
M.~Sch{\"o}berl, H.~Ennsbrunner, K.~Schlacher, {Modelling of piezoelectric
  structures - a hamilton approach}, {Mathematical and Computer Modelling of
  Dynamical Systems} 14~(3) (2008) 179--193.

\bibitem{Rams2018}
H.~Rams, {Contributions to the Analysis and Control of Higher-Order
  Infinite-Dimensional Systems}, Ph.D. thesis, {JKU Linz} (2018).

\bibitem{Meirovitch1997}
L.~Meirovitch, {Analytical Methods in Vibrations}, {Macmillan Publishing Co.,
  Inc.}, 1967.

\bibitem{Meurer2012}
T.~Meurer, J.~Schr{\"o}ck, A.~Kugi, {Trajektorienplanung f{\"u}r eine
  piezo-aktuierte elastische Kirchhoff-Platte}, {Elektrotechnik \&
  Informationstechnik} 129~(1) (2012) 11--17.

\bibitem{Schroeck2011a}
J.~Schr{\"o}ck, {Mathematical Modeling and Tracking Control of Piezo-actuated
  Flexible Structures}, Ph.D. thesis, {TU Wien} (2011).

\end{thebibliography}

\end{document}